\newtheorem{theorem}{Theorem}[section]
\newtheorem{remark}[theorem]{Remark}
\newtheorem{lemma}[theorem]{Lemma}
\newtheorem{proposition}[theorem]{Proposition}
\newtheorem{definition}{Definition}
\numberwithin{equation}{section}
\def\span{\operatorname{span}}
\newcommand{\End}{\operatorname{End}}
\newcommand{\ot}{\otimes}
\newcommand{\ad}{\operatorname{ad}\xspace}
\newcommand{\p}{\partial}
\newcommand{\C}{\ensuremath{\mathbb C}\xspace}
\newcommand{\Q}{\ensuremath{\mathbb{Q}}\xspace}
\renewcommand{\a}{\ensuremath{\alpha}}
\renewcommand{\l}{\ensuremath{\lambda}}
\newcommand{\Z}{\ensuremath{\mathbb{Z}}\xspace}
\newcommand{\N}{\ensuremath{\mathbb{N}}\xspace}
\renewcommand{\leq}{\leqslant}
\renewcommand{\geq}{\geqslant}
\def\LL{\mathcal{L}}
\def\GG{\mathcal{G}}
\def\BB{\mathcal{B}}
\def\I{\mathcal{I}}
\def\J{\mathcal{J}}
\def\sl{\mathfrak{sl}}
\def\gl{\mathfrak{gl}}
\newcommand{\bm}{{\bf m}}
\newcommand{\bn}{{\bf n}}
\newcommand{\bt}{{\bf t}}
\newcommand{\br}{{\bf r}}
\newcommand{\bs}{{\bf s}}
\newcommand{\bi}{{\bf i}}
\newcommand{\bj}{{\bf j}}
\def\detm#1#2{\left|\hskip-3pt\begin{array}{c} #1  \\  #2  \end{array} \hskip -3pt\right|}
\begin{document}

\title[Jet modules for the  centerless Virasoro-like algebra]{Jet modules for the  centerless Virasoro-like algebra}
\footnote{Corresponding author: Genqiang Liu; email:
liugenqiang@amss.ac.cn}
\author{Xiangqian Guo and Genqiang Liu}
\date{} \maketitle

\begin{abstract}
In this paper, we studied the jet modules for the centerless
Virasoro-like algebra which is the Lie algebra of 
the Lie group of the area-preserving diffeomorphisms of a $2$-torus. The jet modules are certain natural
modules over the Lie algebra of semi-direct product of the
centerless Virasoro-like algebra and the Laurent polynomial algebra
in two variables. We reduce the irreducible jet modules to the
finite-dimensional irreducible modules over some
infinite-dimensional Lie algebra and then characterize the
irreducible jet modules with irreducible finite dimensional modules
over $\mathfrak{sl}_2$. To determine the indecomposable jet modules,
we use the technique of polynomial modules in the sense of \cite{BB,
BZ}. Consequently, indecomposable jet modules are described using
modules over the algebra $\BB_+$, which is the ``positive part" of a
Block type algebra studied first by \cite{DZ} and recently by
\cite{IM, I}).
\end{abstract}

\vskip 10pt \noindent {\em Keywords:} Virasoro algebra,
Virasoro-like algebra, Witt algebras, jet modules, irreducible
modules, indecomposable modules, $\mathfrak{sl}_2$-modules, Block
type algebras. \vskip 5pt \noindent {\em 2000 Math. Subj. Class.:}
17B10, 17B20, 17B65, 17B66, 17B68

\vskip 10pt

\section{Introduction}\label{introduction}
In the study of the representation theory of infinite-dimensional
Lie algebras, one of the most important objects is to classify all
irreducible Harish-Chandra modules, that is, weight modules with
finite-dimensional weight spaces, for some important algebras. There
are some known such classification results for several
infinite-dimensional Lie algebras such as (generalized) Virasoro
algebras (\cite{M, S1, LZ2, BF1, Ma, GLZ1}), (generalized)
Heisenberg-Virasoro algebra (\cite{LZ1, GLiZ, LG}), loop-Virasoro
algebra (\cite{GLZ2}), Witt algebra (\cite{BF}), a class of Lie
algebras of Weyl type (\cite{S2}) and some infinite-dimensional Lie
algebras of Block type (\cite{S3, S4, CGZ, SXX}).

In particular, to obtain such classification for the Witt algebras
Billig and Futorny (\cite{BF}) established a completely new method,
where they used the notions of polynomial modules and jet modules
for the Witt algebras. The motivation of this paper is to generalize
their method to the centerless Virasoro-like algebra. As one of the
steps, we consider the jet modules for this algebra. The centerless
Virasoro-like algebra is  also a kind of $W_\infty$ algebras in
Physics(\cite{Se}). It is the Lie algebra of $\text{SDiff}(T^2)$,
the Lie group of the area-preserving diffeomorphisms of a $2$-torus.
It turns out that the generators of the Virasoro algebra (with or
without central charge) can be constructed as a linear combination
of infinitely many $\text{SDiff}(T^2)$ generators, see \cite{ADFL}.

Let $\Z$ be the group of all integers and $\C$ the field of all
complexes. Let $A=\C[t_1^{\pm1},t_2^{\pm1}]$ be the Laurent polynomial algebra
in two variables $t_1$ and $t_2$. The centerless \textbf{Virasoro-like algebra} $\GG$ is the Lie
subalgebra of the rank-$2$ Witt algebra (the derivation algebra of $A$) spanned by
the operators with divergence zero as follows:
$$d_{\bm}=t_1^{\bm(1)}t_2^{\bm(2)}(\bm(2)t_1\frac{\partial}{\partial t_1}-\bm(1)t_2\frac{\partial}{\partial t_2}),\ \bm\in \Z^2\setminus\{0\}.$$

It is straight to check that
\begin{equation}\label{def}\left.\begin{split}
[d_\bm,d_\bn] & =\det{\bn\choose\bm}d_{\bm+\bn},\\
\end{split}\right.\quad\forall\ \bm,\bn\in \Z^2\setminus\{0\}, i=1,2,\end{equation}
where $\bm=(\bm(1), \bm(2))$, $\bn=(\bn(1), \bn(2))$ and
$\det{\bn\choose\bm}=\bn(1)\bm(2)-\bn(2)\bm(1)$.

Note that $\GG$ has no weight zero part, i.e., no
Cartan subalgebra. So it does not even have an intrinsic way to
define weight modules. To get around this, several authors  look at
$\Z^2$-graded and $\Z$-graded modules over $\GG$.


Many irreducible $\Z^2$-graded and $\Z$-graded modules over $\GG$
have been constructed. For example, a large class of uniformly
bounded $\Z^2$-graded ${\GG}$-module were constructed in
[LT1], a large class of $\Z^2$-graded irreducible generalized
highest weight ${\GG}$-module were constructed in
\cite{BZ}, and a class of $\Z$-graded irreducible highest weight
$\GG$-modules were constructed in \cite{BGLZ} (these modules can be
regarded as certain kind of Whittaker modules in \cite{GLi}). In
\cite{WZ}, the $\Z^2$-graded Verma modules over $\GG$ relative to
some total order were investigated. In \cite{LT2}, the structure of
irreducible $\Z^2$-graded modules over the universal central extension $\widetilde{\GG}$ (which is called the Virasoro-like algebra) of $\GG$ with nonzero central actions
and all homogeneous spaces finite dimensional were determined.
Recently, the results of Whittaker modules for $\widetilde{\GG}$ in \cite{GLi} were
generalized to the algebra of semi-direct product of $\GG$ and the
rank-$2$ Laurent polynomials in \cite{TWX}.


In \cite{LS}, the irreducible $\Z^2$-graded and $\Z$-graded
$\GG$-modules were divided into two classes: generalized highest
weight modules and uniformly bounded modules. However, they did not
give a description of the structure of irreducible uniformly bounded
modules. We want to settle this problem for the $\Z^2$-graded case
using the technique developed by Billig and Futorny in \cite{BF}. To
this aim, we first studied the jet modules in this paper. Another
reason to research jet modules over $\GG$ is that the  natural
modules coming from the 2-torus, and the tensor field modules are
jet modules. And it is reasonable to guess that any irreducible
uniformly bounded $\Z^2$-graded $\GG$-module is isomorphic some
subquotient of tensor field modules.


Our paper is organized as follows. In Section \ref{jet modules}, we
first introduce the notation of jet modules for the Virasoro-like
algebra $\GG$ and reduce the study of the jet modules to the study
of finite dimensional modules over a Lie algebra $\LL$. In Section
\ref{irreducible jet modules}, we provided an (equivalent) weaker
condition for irreducible $\GG$-modules to be jet modules. Then
irreducible $\LL$-modules are classified in Section
\ref{LL-modules}, using $\mathfrak{sl}_2$-modules. This give rise to
a completely description of irreducible jet modules via finite
dimensional irreducible $\sl_2$-modules. The last section is devoted
to the indecomposable jet modules, which are shown to be polynomial
modules. More precisely, we show that the indecomposable jet modules
are in one-to-one correspondence with the finite dimensional
indecomposable module over a Lie algebra $\BB_+$, which is the
``positive part" of a special Block type Lie algebra, studied first
by Djokovic and Zhao (\cite{DZ}) and recently by Iohara and Mathieu
(\cite{IM}, \cite{I}). We mention that using the results of Section
5, one can also describe the irreducible jet modules as in Section
4, however, our methods in Section 4 are more straightforward and
does not involve the concept of polynomial modules as in \cite{B1}.

When we were submitting this paper, we learned that there are also
several papers which deal with similar problems. For example, in \cite{JL}, 
the authors classified the irreducible $\GG$-modules with the condition that
the action of $A$ being associative. In \cite{BT}, the authors considered the
same kind of modules for the algebras of divergence zero vector fields on
$n$-dimensional tori, which is just the Virasoro-like algebra considered in this paper when $n=2$.
However, they also require that the action of $A_n=\C[t_1^{\pm1},\cdots,t_n^{\pm1}]$ are associative.
The main differences of the present paper with the above mentioned papers is
that we only assume that the action of $A$ is nonzero; and this is just the main 
difficulty of our methods inducing associativity from being nonzero. 
In fact, such kind of modules are of particular importance when we try to classify all 
irreducible weight Virasoro-like modules with finite-dimensional weight spaces, 
as similar situation occurs in the classification of such modules for the Witt algebras (see \cite{BF}).

\section{Jet modules for the Virasoro-like algebra $\GG$}\label{jet modules}

Let $\mathbb{Z}_+$ and $\N$ be the sets of all nonnegative integers
and all positive integers respectively.
All vector spaces and algebras are over $\C$. 
For a Lie algebra $\LL$, we denote its enveloping algebra by
$U(\LL)$. For any $\bm\in\Z^2$, we denote by $\bm(1),\bm(2)$ its
entries, i.e., $\bm=(\bm(1),\bm(2))$; let $|\bm|=\bm(1)+\bm(2)$. For
any $\br\in \Z_+^2$ and $\bm\in\Z^2$ we denote
$\bm^{\br}=(\bm(1))^{\br(1)}(\bm(2))^{\br(2)}$ and
$\br!=(\br(1))!(\br(2))!$. We also have the generalized binomial
coefficients
${\bn\choose\bm}={\bn(1)\choose\bm(1)}{\bn(2)\choose\bm(2)}$ for
$\bm, \bn\in\Z_+^2$ with $\bm(1)\leq \bn(1)$ and $\bm(2)\leq
\bn(2)$.

In \cite{B1}, Y. Billig introduced the concept of jet modules for
the Witt algebra $W_n$, that is, the Lie algebra of vector fields on
an $n$-dimensional torus. The jet modules originate from the modules
of the jet bundles of tensor fields on the torus. Similarly, we can
define the jet modules for the Virasoro-like algebra $\GG$.

 For convenience denote
$\bt^{\bn}=t_1^{\bn(1)}t_2^{\bn(2)}$ for any $\bn\in\Z^2$. Then $A$
has a natural $\GG$-module structure:
\begin{equation}\label{act_GA}\left.\begin{split}
d_\bm \bt^{\bn} & =\det{\bn\choose\bm}\bt^{\bm+\bn},\\
\end{split}\right.
\quad\quad\forall\ \bm, \bn\in \Z^2\setminus\{0\},
i=1,2.\end{equation} We we can form the semi-direct Lie algebra
${\GG}\ltimes A$ whose Lie bracket is
\begin{equation}\label{def_GA}
[x+f, y+g]=[x,y]+xg-yf,\quad \forall\ x,y\in\GG, f,g\in A.
\end{equation}
It is clear that $A$ is an abelian Lie subalgebra of ${\GG}\ltimes A$.

\begin{definition} For any $\l\in \C\setminus \{0\}$, a $\GG$-module $V$ is called a \textbf{jet module} of parameter $\l$ if
\begin{enumerate}
\item[$(J1)$] $V$ is a $\Z^2$-graded ${\GG}\ltimes A$-module with bounded dimensions of homogenous  spaces,
i.e.,
\begin{equation}\label{grading}\begin{split}
& V=\bigoplus_{\bm\in\Z^2} V_{\bm},\ \ \ \dim V_{\bm}\leq N\ \rm{for\ some\ }N\in\N,\\
\end{split}\end{equation}
and the $\GG$-module structure is just the restriction; 

\item[$(J2)$] The action of all $t^{\bm} ( \bm\in\Z^2\setminus\{0\})$ are bijective and quasi-associative, i.e.,
\begin{equation}\label{asso}
t^{\bm}t^{\bn}v=\l\ t^{\bm+\bn}v, \ \forall\ \bm,\bn, \in\Z^2\setminus\{0\}\  with\  \bm+\bn\neq 0, v\in V.
\end{equation}
\end{enumerate}
Denote the category consisting of all jet $\GG$-modules by
$\mathcal{J}$. A jet module is called \textbf{irreducible} (resp.
\textbf{indecomposable}) if it is an irreducible (resp.
indecomposable) $({\GG}\ltimes A)$-module.
\end{definition}

A homomorphism $f$ between two jet modules $V=\bigoplus_{\bm\in\Z^2}
V_{\bm}, W=\bigoplus_{\bm\in\Z^2} W_{\bm}$ is  an isomorphism if
there is a bijection $\varphi$ of $\Z^2$ such that
$f(V_{\bm})=W_{\varphi(\bm)}$ for any $\bm\in \Z^2$.

\begin{remark} Note that an irreducible jet $\GG$-module and an irreducible $\GG$-module are not the same thing in general.
Indeed, there exist jet modules which are irreducible as
$({\GG}\ltimes A)$-modules but decomposable as $\GG$-modules, for
example, $A$ is an irreducible $({\GG}\ltimes A)$-module via
the natural way but $A=A'\oplus\C t^0$ as a
$\GG$-module, where $A'=\span\{t^{\bm}\ |\
\bm\in\Z^2\setminus\{0\}\}$.
\end{remark}


\begin{remark}\label{free}
Our concept of jet modules are slightly different from that of
Billig in \cite{B1}. Billig's jet modules for $W_n$ require that all
$t^{\bm}$ acts associatively, i.e., $\l=1$ in our case. However, all
properties within our interest can be deduced if we replace
``associativity" with ``quasi-associativity" (See \cite{GLiuZ}).
Indeed, if we take $(t^{\bm})'=t^{\bm}/\l$, then we have
$$(t^{\bm})'(t^{\bn})'v=(t^{\bm+\bn})'v, \ \forall\
\bm,\bn\in\Z^2, v\in V,$$ which justify the terminology
quasi-associative. We also note that the bijectivity of the actions
of $t^{\bm}, 0 \neq\bm\in\Z^2$ on a jet module $V$ indicates $\dim
V_{\bn}=\dim V_{\bm+\bn}$ for all $\bm,\bn\in\C^2$.
\end{remark}

\begin{remark}\label{cond 2}
Note that ${\GG}\ltimes A=({\GG}\ltimes A')\oplus \C t^0$ is a
direct sum of ideals , so the action of $t^0$ is a scalar and does
not affect the module structure for an irreducible $({\GG}\ltimes
A)$-module. As we will see in Section \ref{irreducible jet modules},
when the module $V$ is irreducible over ${\GG}\ltimes A$, the second
condition can be weakened as
\begin{enumerate}
\item[$(J2')$] The action of $A'$ is nonzero.
\end{enumerate}
\end{remark}

Let $V=\bigoplus_{\bm\in \Z^2} V_{\bm}$ be a jet $\GG$-module. Thanks to Remark \ref{free}, we can
choose linear transformations $L(\bm): V_0\rightarrow V_0$ such that
\begin{equation}\label{d_m by L(m)}
d_{\bm}v=t^{\bm}L(\bm)v,\ \forall\ \bm\in \Z^2\setminus\{0\}, v\in
V_0.
\end{equation}
Using the Lie bracket \eqref{def}, \eqref{act_GA} and
\eqref{def_GA}, we can deduce that
\begin{equation}\label{d_mt^n by L(m)}
d_{\bm}(t^{\bn}v)=t^{\bm+\bn}L(\bm)v+\det{\bn\choose\bm}t^{\bm+\bn}v,\
\forall\ \bm,\bn\in \Z^2\setminus\{0\}, v\in V_0
\end{equation}
and the commutator
\begin{equation}\label{def_LL}
[L(\bm),L(\bn)]=\det{\bn\choose\bm}\big(L(\bm+\bn)-L(\bm)-L(\bn)\big),\
\forall\ \bm,\bn\in \Z^2\setminus\{0\}.
\end{equation}
Denote by $\LL$ the Lie algebra defined by the Lie bracket in
\eqref{def_LL} and by abuse of language we may assume that $L(\bm),
\bm\in\Z^2$ forms a basis $\LL$, that is,
$\LL=\bigoplus_{\bm\in\Z^2\setminus\{0\}}\C L(\bm)$. Thus $V_0$ can
be viewed as a finite dimensional module over $\LL$. In Section
\ref{LL-modules}, we will  describe the irreducible finite
dimensional modules over $\LL$. This allow us to characterize the
irreducible jet modules using $\mathfrak{sl}_2$-modules. In Section
\ref{indecomposable jet modules}, we will show that $L(\bm)$, as a
linear transformation on $V_0$, is a polynomial in $\bm\in\Z^2$.
Hence $V_0$ is a polynomial $\LL$-module  in the sense of \cite{BB,
BZ}. Recall that a finite dimensional $\LL$-module $W$ is called a
\textbf{polynomial module} if there is a basis $\{v_1,\dots ,v_k\}$
of $W$ such that
$$L(\bm)v_i=\sum_{j=1}^k f_j(\bm)v_j,\ \bm=(\bm(1),\bm(2))\in\Z^2,\ 1\leq i\leq k, $$
where all $f_j(\bm)$ are polynomials in $\bm(1),\bm(2)$.

\section{Irreducible jet modules}\label{irreducible jet modules}

In this section and the next, we will deduce the structure of
irreducible jet modules. Before this, we first weaken the second
condition for an irreducible jet module, which is of interest in its
own right. First note that ${\GG}\ltimes A=({\GG}\ltimes A')\otimes \C
t^0$, where $A'=\sum_{\bm\in\Z^2\setminus\{0\}}\C t^{\bm}.$ Hence,
for any irreducible  $({\GG}\ltimes A)$-module, the action of $t^0$ is
a scalar and its value does not affect the module structure. Thus we
can change the value of $t^0$ appropriately due to our convenience.

\begin{theorem}\label{weak condition} Let $V$ be an irreducible $({\GG}\ltimes
A)$-module satisfying the conditions $(J1)$ and $(J2')$, then $V$ is
a jet module of parameter $\l$, for some nonzero scalar $\l\in \C$.
\end{theorem}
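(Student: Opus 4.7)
My plan is to extract (J2) from (J1), (J2$'$), and irreducibility of $V$ in three stages.

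First I would collect some submodule arguments. The element $t^0$ is central in $\GG\ltimes A$ because $d_\bm(1)=0$, so Schur's lemma forces $t^0$ to act on $V$ by a scalar $\mu$. The subspace $K:=\bigcap_{\bm\ne 0}\ker(t^\bm)$ is a $(\GG\ltimes A)$-submodule of $V$: $A$-stability is immediate, and for $v\in K$ one has $t^\bm(d_\bn v)=-\det{\bm \choose \bn}t^{\bm+\bn}v$, which vanishes either because $\bm+\bn=0$ (making the determinant zero) or because $v\in K$. Since (J2$'$) gives $K\subsetneq V$, irreducibility forces $K=0$; dually $W:=\sum_{\bm\ne 0}t^\bm(V)$ is a nonzero submodule and hence equals $V$. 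Moreover no single $t^{\bm_0}$ can be zero, since otherwise $[d_\bn,t^{\bm_0}]=\det{\bm_0 \choose \bn}t^{\bm_0+\bn}$ would force $t^{\bm_0+\bn}=0$ for every $\bn$ not parallel to $\bm_0$, and iterating with two independent $\bn$'s would kill every $t^\bl$, contradicting (J2$'$).

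Second and most delicately, I would prove quasi-associativity by a submodule argument inside $\End(V)$. For each $c\in\C$ set
$$\Delta_c(\bm,\bn):=t^\bm t^\bn-c\,t^{\bm+\bn},\qquad \bm,\bn\in\Z^2\setminus\{0\},$$
with $t^0$ read as $\mu\,\id$, and let $\mathcal{D}_c\subset\End(V)$ denote their linear span. Using $[d_\bk,t^\bm t^\bn]=\det{\bm \choose \bk}t^{\bm+\bk}t^\bn+\det{\bn \choose \bk}t^\bm t^{\bn+\bk}$ together with $\det{\bm+\bn \choose \bk}=\det{\bm \choose \bk}+\det{\bn \choose \bk}$, a direct computation gives $[d_\bk,\mathcal{D}_c]\subset\mathcal{D}_c$, while commutativity of $A$ gives $[t^\bk,\mathcal{D}_c]=0$. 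Therefore $N_c:=\{v\in V:\Delta v=0\text{ for all }\Delta\in\mathcal{D}_c\}$ is a $(\GG\ltimes A)$-submodule of $V$, which by irreducibility is either $0$ or $V$, and the equality $N_c=V$ is exactly quasi-associativity with parameter $c$. The heart of the matter is to produce a specific $c$ with $N_c=V$: I would fix a nonzero $v_0\in V_{\bp_0}$ and a pair $(\bm_0,\bn_0)$ with $t^{\bm_0+\bn_0}v_0\ne 0$ (available from Stage~1), extract a tentative value $c=c(v_0,\bm_0,\bn_0)$ from the linear dependence between $t^{\bm_0}t^{\bn_0}v_0$ and $t^{\bm_0+\bn_0}v_0$ inside the finite-dimensional space $V_{\bp_0+\bm_0+\bn_0}$, and propagate this vanishing to all of $\mathcal{D}_c v_0$ using the bracket stability above combined with graded irreducibility of $V_{\bp_0}$ as a module over the degree-zero part of $U(\GG\ltimes A)$ (which follows from graded irreducibility of $V$ together with Burnside's theorem).

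Third, once $t^\bm t^\bn=\lambda\,t^{\bm+\bn}$ holds, the rest is routine. The scalar $\lambda$ is nonzero: if $\lambda=0$, every product $t^\bm t^\bn$ with $\bm+\bn\ne 0$ vanishes, and combining this with $V=\sum_{\bn\ne 0}t^\bn(V)$ and a degree-tracking argument forces $V\subset V_0$, where $A'$ acts by zero, contradicting (J2$'$). For bijectivity of $t^\bm$ with $\bm\ne 0$, injectivity follows because $t^\bm v=0$ gives $t^{\bm+\bn}v=\lambda^{-1}t^\bn t^\bm v=0$ for every $\bn\notin\{0,-\bm\}$, placing $v$ in $K=0$; surjectivity follows because $t^\bn(V)=\lambda^{-1}t^\bm t^{\bn-\bm}(V)\subset t^\bm(V)$ for $\bn\notin\{0,\bm\}$, so $V=\sum_{\bn\ne 0}t^\bn(V)\subset t^\bm(V)$. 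The main obstacle is Stage~2, namely the existence of the universal scalar $c=\lambda$; everything else is forced by irreducibility and the bracket identities.
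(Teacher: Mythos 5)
Your Stage 1 and Stage 3 are sound, and your closing submodule argument (that $N_c=\{v: \Delta v=0,\ \forall\, \Delta\in\mathcal D_c\}$ is a $(\GG\ltimes A)$-submodule, hence equals $V$ once it is nonzero) is exactly how the paper finishes its Lemma 3.5. But Stage 2, which you yourself identify as the heart of the matter, has a genuine gap, and it is precisely the part to which the paper devotes almost all of Section 3. First, the step ``extract a tentative value $c$ from the linear dependence between $t^{\bm_0}t^{\bn_0}v_0$ and $t^{\bm_0+\bn_0}v_0$ inside the finite-dimensional space $V_{\bp_0+\bm_0+\bn_0}$'' is false as stated: two vectors in a space of dimension $\geq 2$ need not be proportional, so no scalar is produced. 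The paper gets proportionality honestly, by first invoking its Lemma \ref{A-mod} to find an irreducible graded $A$-submodule with one-dimensional homogeneous components; this yields scalars $\l_{\bm,\bn}$ that a priori depend on $(\bm,\bn)$. Second, even granting a scalar for one pair, your ``propagation'' step does not work: ad-stability of $\mathcal D_c$ shows $N_c$ is a submodule, but it does not show that the annihilator of a fixed $v_0$ inside $\mathcal D_c$ is ad-stable (applying $d_{\bk}$ to $\Delta v_0=0$ gives $[d_{\bk},\Delta]v_0=-\Delta d_{\bk}v_0$, which has no reason to vanish), and Burnside's theorem for the degree-zero subalgebra gives no control over the degree-$(\bm+\bn)$ operators $\Delta_c(\bm,\bn)$. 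The paper instead proves local nilpotency of the differences $T_{\bm,\bn}$ (Lemma \ref{nilp}), simultaneously triangularizes the commuting matrices $A_{\bm,\bn}$, derives the cocycle identity \eqref{inj_4} by applying $d_{\br}$ to $T_{\bm,\bn}w_k$, and solves it to show all $\l_{\bm,\bn}$ coincide; none of this is replaced by anything in your sketch.

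A second, smaller omission: your Stage 1 only yields $\bigcap_{\bm\neq 0}\ker t^{\bm}=0$ and that no single $t^{\bm}$ is the zero operator, which is compatible with every $t^{\bm}$ being locally nilpotent with nontrivial kernel. The paper's construction of $\l$ in Lemma \ref{inj} uses injectivity of all $t^{\bm}$ throughout (e.g.\ to know $\{t^{\bm}v_1,\dots,t^{\bm}v_k\}$ is a basis of $V_{\bm}$ and to cancel $t^{\br'}$ when deriving \eqref{inj_4}), so it must first establish the dichotomy of Proposition \ref{inj_or_nil} and kill the locally nilpotent branch via Lemma \ref{nil}. Since your route to $c$ neither uses injectivity nor supplies a substitute, you have not avoided this step either; as written, the proposal does not constitute a proof.
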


To prove Theorem \ref{weak condition}, we need the following
technique lemmas. Their proofs are easy and we leave them to the
reader. For a proof of the first lemma please see Lemma 3.1 of
\cite{GLiuZ}. The proof of the second one follows easily from the
fact that $[U(A),[U(A),U(\GG\ltimes A)]]=0$, similar to the proof of
Lemma 3.2 of \cite{GLiuZ}.

\begin{lemma}\label{A-mod} Let
$V=\bigoplus_{\bn\in\Z^2}V_{\bn}$ be an irreducible $\Z^2$-graded
module with bounded dimensions of homogeneous spaces over the
associative algebra $A$, then $\dim V_{\bn}\leq 1$ for all
$\bn\in\Z^2$.
\end{lemma}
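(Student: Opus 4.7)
The plan is to exploit that $A$ is commutative and unital with $1=t^0$, so the cyclic submodule generated by a single homogeneous vector already fills out all of $V$ under the graded irreducibility hypothesis. The bounded-dimension assumption will not actually be needed for the conclusion, beyond ensuring that nonzero homogeneous components exist.

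Concretely, I would first fix any nonzero homogeneous vector $v\in V_{\bn_0}$ and consider the subspace $Av=\sum_{\bm\in\Z^2}\C t^{\bm}v$. Since $v$ is homogeneous and each $t^{\bm}$ shifts the $\Z^2$-grading by exactly $\bm$, this subspace is a $\Z^2$-graded $A$-submodule of $V$ with decomposition $(Av)_{\bn}=\C t^{\bn-\bn_0}v$. Because $v=t^0 v\in Av$, the submodule $Av$ is nonzero.

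By the graded irreducibility of $V$ over $A$, we must have $Av=V$. Reading off the grading, this yields $V_{\bn}=\C t^{\bn-\bn_0}v$ for every $\bn\in\Z^2$, and each such space is spanned by a single vector. Hence $\dim V_{\bn}\leq 1$, as desired.

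The main (and only) subtlety to check is the claim that the cyclic submodule generated by a homogeneous element is automatically a graded submodule, which is immediate from the fact that each $t^{\bm}$ is a homogeneous element of $A$ of degree $\bm$. There is no genuine obstacle; this lemma is a purely formal consequence of the commutativity of $A$ together with the existence of the unit $t^0$, which is why the authors remark that its proof is easy.
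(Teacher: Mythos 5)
Your proof is correct and is essentially the argument the paper has in mind (the paper omits the proof, deferring to Lemma 3.1 of \cite{GLiuZ}): a nonzero homogeneous vector $v\in V_{\bn_0}$ generates a nonzero graded submodule $Av$ whose degree-$\bn$ component is $A_{\bn-\bn_0}v=\C t^{\bn-\bn_0}v$, and graded irreducibility forces $Av=V$. Your remark that the bounded-dimension hypothesis is not needed is also right, granting the usual convention that modules over the unital associative algebra $A$ are unital (so that $t^0v=v$ and hence $Av\neq 0$).
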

%
%
%
%

\begin{lemma}\label{nilp} Suppose that $V$ is an irreducible $({\GG}\ltimes A)$-module
and $x\in U(A)$. If $xv=0$ for some nonzero $v\in V$, then $x$ is
locally nilpotent on $V$.
\end{lemma}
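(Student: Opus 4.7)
The plan is to show that the subspace
\[
W = \{w \in V : x^N w = 0 \text{ for some } N \in \N\}
\]
of locally $x$-nilpotent vectors is a nonzero $(\GG \ltimes A)$-submodule of $V$; irreducibility of $V$ will then force $W = V$, which is exactly the conclusion.

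First, $W$ is nonzero because $v \in W$ by hypothesis. To see that $W$ is stable under the action of $\GG \ltimes A$, I intend to exploit the relation $[U(A),[U(A),U(\GG\ltimes A)]]=0$ stated just before the lemma. Specializing both copies of $U(A)$ to $x$, it asserts $(\ad x)^2(y) = 0$ for every $y \in U(\GG \ltimes A)$. Feeding this into the standard enveloping-algebra identity
\[
x^N y = \sum_{k=0}^{N} \binom{N}{k} (\ad x)^k(y)\, x^{N-k}
\]
collapses all terms with $k \geq 2$ and leaves the clean formula
\[
x^N y = y\, x^N + N\, [x,y]\, x^{N-1}.
\]
Consequently, for $w \in W$ with $x^M w = 0$ and for any $y \in \GG \ltimes A$, choosing $N = M+1$ makes both $x^N w = 0$ and $x^{N-1} w = 0$, whence $x^N(yw)= y(x^N w) + N[x,y](x^{N-1}w) = 0$; that is, $yw \in W$. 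This proves $W$ is a submodule.

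The only step that deserves a word of justification is the bracket identity itself, which follows from the abelianness of $A$ together with $[\GG, A] \subseteq A$: for $a \in A$ and $z \in \GG \ltimes A$ one has $[a,z] \in A$ and hence $(\ad a)^2(z) = 0$, and a short induction on the PBW filtration upgrades this to arbitrary $x_1, x_2 \in U(A)$ and $y \in U(\GG\ltimes A)$. I do not foresee any serious obstacle here: once $W$ is set up and the quadratic vanishing of $\ad x$ is in hand, the verification that $W$ is a submodule is automatic, and irreducibility does the rest.
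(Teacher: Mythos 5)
Your proof is correct and follows the route the paper itself indicates: the paper leaves this lemma to the reader, pointing to exactly the commutator fact you use, and your submodule $W$ of locally $x$-nilpotent vectors together with irreducibility is the intended argument. One caveat: your blanket claim that $(\ad x)^2(y)=0$ for \emph{every} $y\in U(\GG\ltimes A)$ is false --- already for $y=d_1d_2$ with $d_1,d_2\in\GG$ one gets $[b,[a,d_1d_2]]=[a,d_1][b,d_2]+[b,d_1][a,d_2]\neq 0$ in general, so the proposed ``induction on the PBW filtration'' does not go through (what is true is that $\ad x$ lowers the $\GG$-degree by one, hence is locally nilpotent on $U(\GG\ltimes A)$). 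Fortunately you only ever apply the identity to $y$ in the Lie algebra $\GG\ltimes A$ itself, where $[x,y]\in U(A)$ and hence $(\ad x)^2(y)=0$ genuinely holds, and stability of $W$ under the Lie algebra generators is all that is needed for $W$ to be a submodule; so the proof stands once that side remark is corrected or simply restricted to $y\in\GG\ltimes A$.
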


\begin{proposition}\label{inj_or_nil} Let $V$ be an irreducible $({\GG}\ltimes A)$-module
satisfying condition $(J1)$. Then either all $t^{\bm}, \bm\neq 0$
act injectively on $V$ or all $t^{\bm}, \bm\neq 0$ act locally
nilpotently on $V$.
\end{proposition}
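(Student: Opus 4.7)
The plan is to establish the claimed dichotomy direction-by-direction first, and then to propagate local nilpotence from any single nilpotent direction to all of $\Z^2\setminus\{0\}$. For each $\bm\in\Z^2\setminus\{0\}$, set $V^{\bm} := \bigcup_{k\geq 1}\ker(t^{\bm})^k$. Since $A$ is abelian, $V^{\bm}$ is automatically $U(A)$-invariant. For the $\GG$-invariance I would prove, by induction on $k$ starting from $[d_{\bn},t^{\bm}] = c\,t^{\bm+\bn}$ with $c:=\det\binom{\bm}{\bn}$ and using the commutativity of $t^{\bm}$ with $t^{\bm+\bn}$, the operator identity
\begin{equation*}
(t^{\bm})^k d_{\bn} \;=\; d_{\bn}(t^{\bm})^k \;-\; k c\, t^{\bm+\bn}(t^{\bm})^{k-1}.
\end{equation*}
Applied to any $v\in\ker(t^{\bm})^k$ this gives $(t^{\bm})^{k+1}d_{\bn}v = 0$, hence $d_{\bn}\ker(t^{\bm})^k\subseteq\ker(t^{\bm})^{k+1}$ (when $\bn\parallel\bm$ we have $c=0$ and $d_{\bn}$ commutes with $t^{\bm}$, so the inclusion is trivial). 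Thus $V^{\bm}$ is a $(\GG\ltimes A)$-submodule, and irreducibility of $V$ forces $V^{\bm}\in\{0,V\}$: each $t^{\bm}$ is either injective on $V$ or locally nilpotent on $V$.

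Next I would upgrade this pointwise local nilpotence to a global operator bound. Suppose $t^{\bm_0}$ is locally nilpotent and fix $\br$ with $V_{\br}\neq 0$. Set $W_j := (t^{\bm_0})^j V_{\br}\subseteq V_{\br+j\bm_0}$; the sequence $\dim W_j$ is weakly decreasing, and it cannot stabilize at a positive value, for otherwise $t^{\bm_0}$ would act bijectively along the infinite chain $W_j\to W_{j+1}\to\cdots$, contradicting local nilpotence on any nonzero element of $W_j$. Hence $\dim W_j$ strictly decreases to $0$ in at most $N:=\sup_{\br}\dim V_{\br}$ steps, so $(t^{\bm_0})^N = 0$ on all of $V$. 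Let $M\leq N$ be the minimal positive integer with $(t^{\bm_0})^M = 0$.

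For the propagation step, pick any $\bn\not\parallel\bm_0$, so $c=\det\binom{\bm_0}{\bn}\neq 0$, and substitute $k=M$ into the identity above: the left-hand side vanishes, so $t^{\bm_0+\bn}(t^{\bm_0})^{M-1} = 0$. Since $(t^{\bm_0})^{M-1}\neq 0$ by minimality of $M$, the operator $t^{\bm_0+\bn}$ acquires a nonzero kernel, so by step~1 it is locally nilpotent. Because every $\br\in\Z^2\setminus\{0\}$ with $\br\not\parallel\bm_0$ can be written as $\bm_0+\bn$ for some $\bn\not\parallel\bm_0$, all such $\br$ are locally nilpotent. For the remaining $\br\parallel\bm_0$ with $\br\neq 0$, pick any $\bm_1\not\parallel\bm_0$; by what was just shown $\bm_1$ is locally nilpotent, and since $\br\not\parallel\bm_1$ (as $\br\parallel\bm_0$ while $\bm_1\not\parallel\bm_0$), a second application of the propagation step with $\bm_1$ in place of $\bm_0$ places $\br$ in the locally nilpotent set. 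I expect step~2 to be the main obstacle: bridging from pointwise local nilpotence on an infinite-dimensional graded space to the operator equality $(t^{\bm_0})^N = 0$ is where the bounded-grading hypothesis in $(J1)$ is essential, and without this global bound the propagation identity would only yield information on a proper subspace rather than a nonzero kernel for $t^{\bm_0+\bn}$.
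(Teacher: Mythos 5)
Your step 1 and step 3 are sound, and your route is genuinely different from the paper's: you propagate local nilpotence outward from one nilpotent direction, whereas the paper assumes one $t^{\bm}$ injective and propagates injectivity (its Claims 1--3), taking the per-direction dichotomy from Lemma~\ref{nilp}. The identity $(t^{\bm})^k d_{\bn}=d_{\bn}(t^{\bm})^k-kc\,t^{\bm+\bn}(t^{\bm})^{k-1}$, the fact that $\bigcup_{k}\ker (t^{\bm})^k$ is a $(\GG\ltimes A)$-submodule, and the final "minimal exponent" manoeuvre are all correct; the last of these is exactly what the paper does inside its Claim 2.

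The gap is in step 2, and it is fatal as written. From "the weakly decreasing sequence $\dim W_j$ cannot be eventually constant at a positive value" you may conclude only that $W_j=0$ for $j$ sufficiently large, not that the sequence drops strictly at every step until it vanishes. Since $W_j$ and $W_{j+1}$ lie in different homogeneous components, $\dim W_j=\dim W_{j+1}$ only says $t^{\bm_0}$ is injective on $W_j$; it gives no control over $W_{j+1}\to W_{j+2}$, so the sequence can plateau, e.g.\ $1,1,\dots,1,0$ with an arbitrarily long plateau, all within the bound $N$. Hence you obtain $(t^{\bm_0})^{M_{\br}}V_{\br}=0$ with $M_{\br}$ depending on $\br$, and $\sup_{\br}M_{\br}$ may be infinite, so no minimal $M$ with $(t^{\bm_0})^{M}=0$ on all of $V$ exists and step 3 cannot be launched. (Nor can step 3 be run vectorwise: for $v\in V_{\br}$ with its own minimal exponent $M$, the term $(t^{\bm_0})^{M}d_{\bn}v$ need not vanish, because $d_{\bn}v$ lies in $V_{\br+\bn}$ whose nilpotence index may exceed $M$.) This uniformity issue is precisely the technical heart of the paper's argument: there the uniform bound comes from the degree-zero element $t^{-\bn}t^{\bn}$, which preserves each $V_{\br}$ of dimension at most $N$ and is therefore nilpotent of index at most $N$ on all of $V$; the factor $(t^{-\bn})^{N}$ is then peeled off by repeatedly applying suitable $d$'s and cancelling against the assumed injective $t^{\bm}$. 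That cancellation is only available when one propagates from an injective direction; in your set-up there is nothing injective to cancel against, and one would need an induction of the kind carried out in Lemma~\ref{nil} to reach global nilpotence. You correctly identified step 2 as the crux, but the argument given there does not close it.
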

\begin{proof} Suppose that $V=\bigoplus_{\bn\in \Z^2} V_{\bn}$
 and  $t^{\bm}$ acts injectively on $V$ for some
$\bm\in\Z^2\setminus\{0\}$.

\noindent\textbf{Claim 1.} For any $\bn\in\Z^2$ with
$\det{\bn\choose\bm}\neq 0$, $t^{\bn}$ acts nilpotently on $V$ if it
acts locally nilpotently on $V$.

Take any $\bn\in\Z^2$ such that $\det{\bn\choose \bm}\neq0$ and
$t^{\bn}$ acts locally nilpotently on $V$. Then $t^{-\bn}t^{\bn}$ is
locally nilpotent on $V$ and hence nilpotent on each homogeneous
space of $V$. Since the dimensions of all homogeneous spaces have an
upper bound, there exists $N\in\N$ such that
$\big(t^{-\bn}t^{\bn}\big)^{N}V=0$, i.e.,
$\big(t^{-\bn}\big)^{N}\big(t^{\bn}\big)^{N}V=0$. Here we note that
$(t^{\bn})^k$ denotes $t^{\bn}\cdots t^{\bn}$ rather than $t^{k\bn}$
for all $k\in\N$ and one should be careful with this notion
throughout this section.

Now suppose that there exist positive integers $k$ and $N_k $ with
$1\leq k\leq N$ such that
$\big(t^{-\bn}\big)^{k}\big(t^{\bn}\big)^{N_k}V=0$. Then we have
\begin{equation}\begin{split}
0& = d_{\bm+\bn}\big(t^{-\bn}\big)^{k}\big(t^{\bn}\big)^{N_k}V\\
 & = N_k\det{\bn\choose \bm}t^{\bm+2\bn}\big(t^{-\bn}\big)^{k}\big(t^{\bn}\big)^{N_k-1}V
    -k\det{\bn\choose \bm}t^{\bm}\big(t^{-\bn}\big)^{k-1}\big(t^{\bn}\big)^{N_k}V.
\end{split}\end{equation}
Applying $t^{\bn}$ to the above equation, we deduce that
$t^{\bm}\big(t^{-\bn}\big)^{k-1}\big(t^{\bn}\big)^{N_k+1}V=0$ and
hence $\big(t^{-\bn}\big)^{k-1}\big(t^{\bn}\big)^{N_k+1}V=0$ since
$t^{\bm}$ is injective. By downward induction on $k$, we obtain that
$\big(t^{\bn}\big)^{N_0}V=0$ for some $N_0\in\N$. Claim 1 is proved.

\noindent\textbf{Claim 2.} $t^{q\bm/p}$ acts injectively on $V$ for
any $p,q\in\Z$ whenever $q\bm/p\in\Z^2\setminus\{0\}$.

Assume $q/p< 0$ without loss of generality. Suppose on the contrary
that the action of $t^{q\bm/p}$ on $V$ is not injective, then
$t^{q\bm/p}$ acts on $V$ locally nilpotently by Lemma \ref{nilp}.
Consequently, $(t^{\bm})^{-q}(t^{q\bm/p})^p$ acts nilpotently on
each homogeneous space and hence also on the whole space $V$. There
exists an $N'\in\N$ such that
$\big((t^{\bm})^{-q}(t^{q\bm/p})^p\big)^{N'}V=0$. Since $t^{\bm}$ is
injective, we have $(t^{q\bm/p})^{pN'}V=0$. Choose a minimal
$N''\in\N$ such that $(t^{q\bm/p})^{N''}V=0$.

Take any $\bn\in\Z^2$ such that $\det{\bn\choose\bm}\neq 0$, then we
have
$$0=d_{\bn-q\bm/p}(t^{q\bm/p})^{N''}V=N''\det{q\bm/p\choose
\bn}t^{\bn}(t^{q\bm/p})^{N''-1}V=0.$$ Since
$(t^{q\bm/p})^{N''-1}V\neq0$ by the choice of $N''$, we see that
$t^{\bn}$ is locally nilpotent on $V$ by Lemma \ref{nilp}. By Claim
1, $t^{\bn}$ is nilpotent on $V$ and we can choose a minimal
$N_0\in\N$ such that $(t^{\bn})^{N_0}V=0$. Then
$$0=d_{\bm-\bn}(t^{\bn})^{N_0}V=N_0\det{\bn\choose
\bm}t^{\bm}(t^{\bn})^{N_0-1}V=0,$$ which implies that $t^m$ is not
injective, contradiction. Claim 2 follows.

\noindent\textbf{Claim 3.} $t^{\bn}$ is injective on $V$ for all
$\bn\in\Z^2\setminus\{0\}$.

By Claim 2, it suffices to prove this claim for $\bn\in\Z^2$ with
$\det{\bn\choose\bm}\neq0$. Take one such $\bn$ and suppose that
$t^{\bn}$ is locally nilpotent on $V$. Then Claim 1 implies that
$t^{\bn}$ is nilpotent on $V$. Find $k\in\N$ such that
$(t^{\bn})^kV=0$.

Without loss of generality, we also assume that $\dim V_0\geq \dim
V_{\br}$ for all $\br\in\Z^2$. Then we have
$t^{\bm}V_{i\bm}=V_{(i+1)\bm}$ and $\dim V_{i\bm}=\dim V_{0}$ for
all $i\in\Z$. Then for any $v\in V_{-k\bm}$ we have
$$0=(d_{\bm-\bn})^k(t^{\bn})^{k}v=k!\det{\bn\choose
\bm}(t^{\bm})^kv+t^{\bn}u,$$ for some $u\in V_{-\bn}$. This means
that the map $$t^{\bn}: V_{-\bn} \rightarrow V_{0}$$ is surjective.
The fact that $\dim V_{0}\geq \dim V_{-\bn}$ indicates that
$t^{\bn}$ is bijective on $V_{-\bn}$. 
Replacing $V_0$ with $V_{-i\bn}$ in the above
argument, we may deduce that $t^{\bn}$ is bijective on all
$V_{-i\bn}, i\in\N$. Thus $t^{\bn}$ can not be nilpotent,
contradiction. Claim 3 follows.

Finally, this lemma follows from Claim 3 and Lemma \ref{nilp}.
\end{proof}

\begin{lemma}\label{inj} Let $V$ be an irreducible $(\GG\ltimes A)$-module
satisfying condition $(J1)$.  If $t^{\bm}$ acts injectively on $V$
for any $\bm\in\Z^2\setminus\{0\}$, then there exists
$\l\in\C\setminus\{0\}$ such that $(t^{\bm}t^{\bn}-\l
t^{\bm+\bn})V=0$ for all $\bm,\bn,\bm+\bn\in\Z^2\setminus\{0\}$.
\end{lemma}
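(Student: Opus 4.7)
The plan is to isolate the associativity-defect operators $\Phi(\bm,\bn) := (t^{\bm+\bn})^{-1} t^\bm t^\bn \in \End(V)$, defined whenever $\bm+\bn\neq 0$, and show that they all equal a single nonzero scalar $\lambda I$. Because every $t^\bm$ is injective and $\dim V_\bn \leq N$, applying the same statement to $t^{-\bm}$ forces $\dim V_\bn = \dim V_{\bm+\bn}$, so each $t^\bm: V_\bn \to V_{\bm+\bn}$ is a bijection between finite-dimensional spaces and is therefore invertible as an operator on $V$. Thus $\Phi(\bm,\bn)$ is a well-defined grading-preserving endomorphism. Since the abelian Lie subalgebra $A$ acts on $V$ by pairwise commuting operators, $\Phi(\bm,\bn)$ commutes with every $t^\bl$, and $\Phi(\bm,\bn)=\Phi(\bn,\bm)$.

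Writing $P_\lambda(\bm,\bn) := t^\bm t^\bn - \lambda t^{\bm+\bn}$ and $M_\lambda := \sum_{\bm+\bn\neq 0} P_\lambda(\bm,\bn)V$, the identity $\det\binom{\bm+\bn}{\bl}=\det\binom{\bm}{\bl}+\det\binom{\bn}{\bl}$ combined with the Lie brackets of $\GG\ltimes A$ yields
$$[d_\bl, P_\lambda(\bm,\bn)] = \det\binom{\bm}{\bl} P_\lambda(\bl+\bm,\bn) + \det\binom{\bn}{\bl} P_\lambda(\bm,\bl+\bn),$$
and a parallel rearrangement gives $t^\bl P_\lambda(\bm,\bn) = P_\lambda(\bl,\bm) t^\bn + \lambda P_\lambda(\bl+\bm,\bn) - \lambda P_\lambda(\bl,\bm+\bn)$. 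Hence $M_\lambda$ is a $(\GG\ltimes A)$-submodule, so by irreducibility $M_\lambda \in \{0,V\}$, and the task reduces to producing $\lambda \in \C\setminus\{0\}$ with $M_\lambda = 0$, equivalently $\Phi(\bm,\bn)\equiv \lambda I$.

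The commuting family $\{\Phi(\bm,\bn)|_{V_0}\}$ of invertible operators on the finite-dimensional space $V_0$ has a nonzero common eigenvector $v$, with eigenvalues $\lambda(\bm,\bn)\in \C\setminus\{0\}$. Feeding $(t^{\bm+\bn})^{-1}v$ into the operator identity extracted from $[d_\bl, t^\bm t^\bn] = [d_\bl, \Phi(\bm,\bn) t^{\bm+\bn}]$ and projecting onto the joint $\Phi$-eigendecomposition of $V_\bl$, I would observe that the right-hand side is a scalar multiple of the $\lambda$-common eigenvector $t^\bl v$, while the left-hand side equals $t^\bl (\lambda(\bm,\bn)-\Phi(\bm,\bn)) L(\bl)v$ (with $L(\bl)v := (t^\bl)^{-1} d_\bl v \in V_0$) and has no component in the $\lambda$-eigenspace. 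Matching components (varying $(\bm,\bn)$ over a family of $\Phi$'s separating the joint eigenspaces) yields two consequences: first, $L(\bl)v$ lies in the joint $\lambda$-eigenspace $W_0\subseteq V_0$, and second, for every admissible triple
$$\det\binom{\bm}{\bl}\lambda(\bl+\bm,\bn) + \det\binom{\bn}{\bl}\lambda(\bm,\bl+\bn) = \det\binom{\bm+\bn}{\bl}\lambda(\bm,\bn).$$
Setting $\bl=\bm$ or $\bl=\bn$ yields the shear invariances $\lambda(\bm,\bn+k\bm) = \lambda(\bm+k\bn,\bn) = \lambda(\bm,\bn)$ whenever $\det\binom{\bn}{\bm}\neq 0$; together with the associativity cocycle $\lambda(\bm,\bn)\lambda(\bm+\bn,\br) = \lambda(\bn,\br)\lambda(\bm,\bn+\br)$ (which follows from associativity of operator composition), these force $\lambda(\bm,\bn)$ to be a single constant $\lambda\in \C\setminus\{0\}$.

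The joint $\lambda$-eigenspace $W_0\subseteq V_0$ is then nonzero, and the first consequence above (applied to every vector in $W_0$) shows that each $L(\bl)$ preserves $W_0$. Consequently $\tilde{W} := \sum_\br t^\br W_0$ is $A$-stable (since $t^\bl t^\br W_0 = \lambda t^{\bl+\br} W_0$) and $\GG$-stable (since $d_\bl t^\br w = \det\binom{\br}{\bl} t^{\bl+\br} w + \lambda t^{\bl+\br} L(\bl) w$ with $L(\bl)w\in W_0$). By irreducibility $\tilde{W} = V$, hence $W_0 = V_0$, so $\Phi(\bm,\bn)=\lambda I$ on all of $V$, as required. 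The main technical obstacle is the functional-equation step showing that $\lambda(\bm,\bn)$ is globally constant: the cocycle and symmetry relations alone admit non-trivial coboundary-type solutions $\lambda(\bm,\bn)=\alpha(\bm+\bn)/(\alpha(\bm)\alpha(\bn))$, and the shear invariances coming from the $\GG$-action are the crucial ingredient that rules them out.
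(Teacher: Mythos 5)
Your overall architecture parallels the paper's: defect operators measured against $t^{\bm+\bn}$, the identical functional equation $\det{\bm\choose\bl}\lambda(\bl+\bm,\bn)+\det{\bn\choose\bl}\lambda(\bm,\bl+\bn)=\det{\bm+\bn\choose\bl}\lambda(\bm,\bn)$, and a nonzero ``good'' subspace that generates a submodule. The genuine gap is in how you extract that equation. Working with a common eigenvector $v$ of the commuting family $\Phi(\bm,\bn)|_{V_0}$, the identity you obtain (after cancelling the injective $t^{\bl}$) has the form $(\lambda(\bm,\bn)-\Phi(\bm,\bn))L(\bl)v=c\,v$, where $c$ is exactly the quantity you need to vanish. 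Your assertion that the left-hand side ``has no component in the $\lambda$-eigenspace'' is false in general: your projection argument does correctly place $L(\bl)v$ in the joint \emph{generalized} $\lambda$-eigenspace, but there the operator $\Phi(\bm,\bn)-\lambda(\bm,\bn)$ is merely nilpotent, and an eigenvector $v$ sits at the bottom of its Jordan chains, so $v$ can perfectly well lie in its image (take $N=E_{12}$, $v=e_1=Ne_2$). Hence $c=0$ does not follow. The same defect infects your ``first consequence'': you only get $L(\bl)v$ in the generalized eigenspace, not in the honest eigenspace $W_0$, so $\tilde{W}=\sum_{\br}t^{\br}W_0$ is not visibly $\GG$-stable and the final irreducibility step does not close.

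The paper repairs precisely this point by flipping from a common eigenvector (a sub) to a common one-dimensional quotient (a top), and you are missing its two enabling ingredients. First, Lemma \ref{nilp}: in an irreducible $(\GG\ltimes A)$-module, an element of $U(A)$ annihilating one nonzero vector is locally nilpotent on all of $V$; applied to $T_{\bm,\bn}=t^{\bm}t^{\bn}-\lambda_{\bm,\bn}t^{\bm+\bn}$, which kills the common eigenvector furnished by Lemma \ref{A-mod}, this shows $\lambda(\bm,\bn)$ is the \emph{only} eigenvalue of $\Phi(\bm,\bn)$ on $V_0$. Second, one simultaneously triangularizes the family and reads the identity off the top basis vector $w_k$ modulo the codimension-one invariant subspace $V_0'$: since $(\Phi(\bm,\bn)-\lambda(\bm,\bn))V_0\subseteq V_0'$, the error term genuinely dies in the quotient and the functional equation follows cleanly; the bottom vector $w_1$ is then an honest common eigenvector with eigenvalue $\lambda$, and $\{v\in V\mid (t^{\bm}t^{\bn}-\lambda t^{\bm+\bn})v=0\}$ is the nonzero submodule that finishes the argument. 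Your subsequent analysis of the functional equation (shear invariance plus the associativity cocycle) is essentially the paper's and is fine; it is the extraction step that requires the quotient rather than the eigenvector.
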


\begin{proof} Suppose that $V=\bigoplus_{\bn\in \Z^2} V_{\bn}$. Since each $t^{\bm}$ is injective, we have $\dim
V_{\bm}=\dim V_{\bn}$ for all $\bm,\bn\in \Z^2$. By Lemma
\ref{A-mod}, we see that $V$ contains an irreducible
, $A$-submodule $W=\oplus_{\bm\in Z^2}W_{\bm}$ with $W_{\bm}\subset V_{\bm}$. From remark \ref{cond 2}, we may assume that
$t^0$ acts as a nonzero scalar $\l\in\C$. Then there exist
$\lambda_{\bm,\bn}\in \C\setminus\{0\}$ such that
$$t^{\bm}t^{\bn} v=\lambda_{\bm,\bn}t^{\bm+\bn}v,\ \forall\ \bm, \bn\in \Z^2, v\in W_0.$$
Set
$$T_{\bm,\bn}=t^{\bm}t^{\bn} -\lambda_{\bm,\bn} t^{\bm+\bn},\ \forall\ \bm, \bn\in \Z^2.$$
We can adjust the value of $\l$ so that
$\l_{(1,0),(-1,0)}=\l_{(1,0),(1,0)}$. Note that
$\l_{\bm,\bn}=\l_{\bn,\bm}$ and $\l_{\bm,0}=\l$. By Lemma \ref{nilp}
we have that $T_{\bm,\bn}$ is locally nilpotent on $V$ for all
$\bm,\bn\in \Z^2$.

 Choose a basis
$\{v_1,\cdots,v_k\}$ of $V_0$, then $\{t^{\bm}v_1,\cdots,
t^{\bm}v_k\}$ is a basis of $V_{\bm}$ for all $\bm\in \Z^2$. There
exist $k\times k$ matrices $A_{\bm,\bn}$ such that
$$t^{\bm}t^{\bn}(v_1,\cdots,v_k)=t^{\bm+\bn}(v_1,\cdots,v_k)A_{\bm,\bn}.$$ Since $A$ is commutative, we
see that
$$A_{\bm_1,\bn_1}A_{\bm_2,\bn_2}=A_{\bm_2,\bn_2}A_{\bm_1,\bn_1},\ \forall\ \bm_1,\bm_2,\bn_1,\bn_2\in \Z^2.$$

So there exists a complex matrix $S$ such that
$B_{\bm,\bn}=S^{-1}A_{\bm,\bn}S$ are upper triangular matrices for
all $\bm,\bn\in \Z^2$. Set
$$(w_1,\cdots,w_k)=(v_1,\cdots,v_k)S.$$
We have
$$t^{\bm}t^{\bn}(w_1,\cdots,w_k)=t^{\bm+\bn}(w_1,\cdots,w_k)B_{\bm,\bn}$$
and
\begin{equation}\label{inj_1} T_{\bm,\bn}(w_1,\cdots,w_k)=t^{\bm+\bn}(w_1,\cdots,w_k)(B_{\bm,\bn}-\lambda_{\bm,\bn}I_k),\end{equation}
where $I_k$ is the identity matrix of rank $k$. Since $T_{\bm,\bn}$
is locally nilpotent on $V$, we see that all the diagonal entries of
$B_{\bm,\bn}$ are $\lambda_{\bm,\bn}$. In particular,
\begin{equation}\label{inj_2}
T_{\bm, \bn}w_l\in \sum_{i=0}^{l-1}\C t^{\bm+\bn}w_i,\ \forall\
\bm,\bn\in \Z^2, l=1,\cdots, k\ {\textrm {where}} \ w_0=0,
 \end{equation}
and hence $(T_{\bm,\bn})^{k}w_k=0$.

For any $\bm,\bn\in \Z^2$ and $u\in V_{-\bm-\bn}=\sum_{i=1}^{k}\C
t^{-\bm-\bn}w_i$, we have by (\ref{inj_2}) that
\begin{equation}\label{inj_3}\aligned
T_{\bm,\bn}u\in & \sum_{i=0}^{k}\C T_{\bm,\bn}t^{-\bm-\bn}w_i=
\sum_{i=0}^{k}\C t^{-\bm-\bn}T_{\bm,\bn}w_i\\ & \subseteq
\sum_{i=0}^{k-1}\C t^{-\bm-\bn}t^{\bm+\bn}w_i
 \subseteq\sum_{i=0}^{k-1}\C \big(T
_{\bm+\bn,-\bm-\bn}+\l_{\bm+\bn,-\bm-\bn}t^0\big)w_i\\& \subseteq
\sum_{i=0}^{k-1}\C w_i. \endaligned\end{equation} Set
$V'_0=\sum_{i=0}^{k-1}\C w_i$, then we have
$T_{\bm,\bn}V_{-\bm-\bn}\subset V'_0$ for all $\bm,\bn\in \Z^2$.


Given any $\bm,\bn, \br\in \Z^2$, set $\br'=\bm+\bn+\br$ and we have
$$\aligned 0
&=t^{-\br'}d_{\br}T_{\bm,\bn}w_k\equiv t^{-\bm-\bn-\br}[d_{\br}, T_{\bm,\bn}]w_k \mod V_0'\\
&\equiv t^{-\br'}\left(\det{\bm\choose \br}t^{\bm+\br}t^{\bn}+\det{\bn\choose\br}t^{\bm}t^{\bn+\br}-\det{\bm+\bn\choose\br}\l_{\bm,\bn}t^{\bm+\bn+\br}\right)w_k \mod V_0'\\
&\equiv t^{-\br'} \left(\det{\bm\choose\br}\l_{\bm+\br,\bn}+\det{\bn\choose\br}\l_{\bm,\bn+\br}-\det{\bm+\bn\choose\br}\l_{\bm,\bn}\right)t^{\bm+\bn+\br}w_k \mod V_0'.\\
\endaligned$$ 
By the injectivity of $t^{\bm}, t^{\bn}$ and $t^{\br}$, we obtain
that
\begin{equation}\label{inj_4}
\det{\bm\choose\br}\l_{\bm+\br,\bn}+\det{\bn\choose\br}\l_{\bm,\bn+\br}-\det{\bm+\bn\choose
\br}\l_{\bm,\bn}=0,\ \forall\ \bm,\bn,\br\in \Z^2.
\end{equation}

Take $\bm=j\bn$ for some $j\in\Z$ and $\br\in\Z^2$ such that
$\det{\bn\choose\br}\neq 0$ in \eqref{inj_4}, we have
\begin{equation}\label{inj_5}j\l_{j\bn+\br,\bn}+\l_{j\bn,\bn+\br}-(j+1)\l_{j\bn,\bn}=0,\
\forall\ \det{\bn\choose\br}\neq 0.\end{equation} Take $j=1$ we get
$\l_{\bn+\br,\bn}=\l_{\bn,\bn}$ for all $\det{\bn\choose\br}\neq 0$.
Then we have $\l_{j\bn+\br,\bn}=\l_{\bn,\bn}$ and
$$\l_{j\bn,\bn+\br}=\l_{\bn+\br,\bn+\br}=\l_{\bn+\br,\bn}=\l_{\bn,\bn}.$$
Substitute these into \eqref{inj_5}, we can deduce that
$\l_{j\bn,\bn}=\l_{\bn,\bn}$ for $j\neq -1$. Consequently,
$\l_{\bm,\bn}=\l_{\bn,\bn}$ for all $\bm,\bn,\bm+\bn\neq 0$.

Taking $\br=-\bm-\bn$ in \eqref{inj_4}, we see that
$\l_{-\bn,\bn}=\l_{-\bm,\bm}$ for all $\det{\bm\choose\bn}\neq 0$
and hence for all $\bm,\bn\neq 0$. Combining these results with the
assumption that $\l_{(1,0),(1,0)}=\l_{(1,0),-(1,0)}$, we obtain that
$\l_{\bm,\bn}=\l_{\bn,\bn}$ for all $\bm,\bn\neq 0$. Note that
\begin{equation*}\begin{split}
(t^{\bm}t^{-\bm})(t^{\bn}t^{-\bn})w_1 & = \l_{\bn, -\bn}t^0(t^{\bm}t^{-\bm})w_1 = \l_{\bm,-\bm}\l_{\bn,-\bn}(t^{0})^2w_1 \\
=(t^{\bm}t^{\bn})(t^{-\bm}t^{-\bn})w_1 & =
\l_{\bm,\bn}\l_{-\bm,-\bn}t^{\bm+\bn}t^{-\bm-\bn}w_1=\l_{\bm,\bn}\l_{-\bm,-\bn}\l_{\bm+\bn,-\bm-\bn}t^0w_1.
\end{split}\end{equation*}
Recalling that $t^0$ acts as the scalar $\l$, we have
$\l_{\bm,\bn}=\l$ for all $\bm,\bn\in\Z^2$, as desired.

Finally, we have
$$w_1\in V'=\{v\in V\ |\ \big(t^{\bm}t^{\bn}-\l t^{\bm+\bn}\big)v=0,\ \forall\  \bm, \bn\in \Z^2\},$$
which is nonzero. Since the space spanned by $t^{\bm}t^{\bn}-\l
t^{\bm+\bn}, \bm,\bn\in \Z^2$, is stable under the action of
$\GG\ltimes A$, so $V'$ is an $(\GG\ltimes A)$-submodule of $V$. By
the irreducibility of $V$, we see that $V'=V$. Our result follows.
\end{proof}


\begin{lemma}\label{nil} Let $V$ be an irreducible $(\GG\ltimes A)$-module
satisfying condition $(J1)$. If $t^{\bm}$ acts locally nilpotently
on $V$ for any $\bm\in\Z^2\setminus\{0\}$, then $t^{\bm}V=0$ for all
$\bm\in\Z^2\setminus\{0\}$.
\end{lemma}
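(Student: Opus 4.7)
The plan is to first upgrade local nilpotency to uniform nilpotency, then combine this with commutator identities in $\GG\ltimes A$ to produce a nonzero $(\GG\ltimes A)$-submodule on which $A'$ acts as zero; the irreducibility of $V$ then finishes the argument.

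First I would show that there exists $N\in\N$ (which may be taken to be the uniform dimension bound on homogeneous spaces) with $(t^{\bm})^N V = 0$ for every $\bm\neq 0$. On each finite-dimensional $V_{\bn}$, the ascending chain $\ker t^{\bm}|_{V_{\bn}}\subseteq \ker(t^{\bm})^2|_{V_{\bn}}\subseteq\cdots$ stabilizes within at most $\dim V_{\bn}\leq N$ steps, and local nilpotency forces this stable kernel to exhaust $V_{\bn}$. Using commutativity of $A$ I compute
$$d_{\br}(t^{\bm})^N \;=\; N\det{\bm\choose\br}(t^{\bm})^{N-1}t^{\bm+\br} + (t^{\bm})^N d_{\br},$$
so applying this to $V$ yields $(t^{\bm})^{N-1}t^{\bn}V=0$ whenever $\bm,\bn$ are linearly independent. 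Iterating gives $(t^{\bm})^{N-k}(t^{\bn})^k V = 0$ for $0\leq k\leq N$ and $\bm,\bn$ linearly independent.

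Next I would consider the graded subspaces $V^{(k)} = \{v\in V : (A')^k v = 0\}$, where $(A')^k\subseteq U(A)$ denotes the span of $k$-fold products of elements of $A'$. Each $V^{(k)}$ is $A$-stable by commutativity of $A$, and $\GG$-stable because in the expansion
$$[d_{\br},\, t^{\bm_1}\cdots t^{\bm_k}] \;=\; \sum_{i=1}^{k}\det{\bm_i\choose\br}\, t^{\bm_1}\cdots t^{\bm_i+\br}\cdots t^{\bm_k}$$
any summand with $\bm_i+\br=0$ has vanishing coefficient $\det{-\br\choose\br}=0$, so the commutator still lies in $(A')^k$. By irreducibility each $V^{(k)}$ is $0$ or $V$. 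If $k^*$ is minimal with $V^{(k^*)}=V$, then $k^*=1$: otherwise $(A')^{k^*-1}V$ is a nonzero $(\GG\ltimes A)$-submodule (verified analogously) and hence equals $V$, so $(A')^{k^*}V = A'V$, and $V^{(k^*)}=V$ forces $A'V=0$, i.e. $V^{(1)}=V$ already. Everything therefore reduces to producing some $M$ with $V^{(M)}=V$, equivalently $(A')^M V = 0$.

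The main obstacle is this final step. I would iterate the adjoint action of the $d_{\br}$'s on the identity $(t^{\bm_1})^{N-k+1}t^{\bm_2}\cdots t^{\bm_k}V=0$, extending it by induction on $k$ to reach $t^{\bm_1}\cdots t^{\bm_N}V=0$ whenever some $\bm_i$ is linearly independent from every other $\bm_j$. The residual all-parallel case, in which all $\bm_i$ lie in $\Z\bm_0$ for a primitive $\bm_0$, would then be reduced to the previous case by expressing $t^{c\bm_0}$ as a scalar multiple of $[d_{c\bm_0-\bn},t^{\bn}]$ for any $\bn$ transverse to $\bm_0$ and expanding; the potentially problematic diagonal summands of the form $\bm_i+\br=-\bm_j$ arising in the expansion vanish via the antisymmetry of the determinant together with the already-established vanishing identities. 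This yields $(A')^N V = 0$, and the preceding submodule reduction then delivers $t^{\bm}V=0$ for every $\bm\neq 0$.
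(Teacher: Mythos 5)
Your overall architecture is the same as the paper's: show that some power of $A'$ annihilates $V$, observe that $\{v\in V\mid A'v=0\}$ is a $(\GG\ltimes A)$-submodule (your computation of $[d_{\br},t^{\bm_1}\cdots t^{\bm_k}]$ and the vanishing of the $\bm_i+\br=0$ terms is correct and is exactly how the paper concludes), and invoke irreducibility. However, there is a genuine gap at the very first step, and it is precisely the point where the paper has to work hardest. You claim $(t^{\bm})^N V=0$ with $N$ the uniform dimension bound, on the grounds that the ascending chain $\ker t^{\bm}|_{V_{\bn}}\subseteq\ker(t^{\bm})^2|_{V_{\bn}}\subseteq\cdots$ ``stabilizes within at most $\dim V_{\bn}$ steps.'' That stabilization argument is valid for powers of an endomorphism of a single space, where $\ker T^{j}=\ker T^{j+1}$ forces $\ker T^{j+1}=\ker T^{j+2}$. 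Here $t^{\bm}$ shifts degree, so $K_j:=\ker(t^{\bm})^j|_{V_{\bn}}$ satisfies no such propagation: $K_j=K_{j+1}$ only tells you something about the chain based at $V_{\bn}$, while the inductive step would require the corresponding equality for the chain based at $V_{\bn+\bm}$. An ascending chain in an $N$-dimensional space has at most $N$ strict inclusions, but it can repeat before jumping, so it need not exhaust $V_{\bn}$ within $N$ steps, and in any case the number of steps needed could a priori depend on $\bn$. Local nilpotency plus bounded homogeneous dimensions gives uniform nilpotency only for \emph{degree-zero} elements of $U(A)$; this is why the paper starts from $(t^{-\bn}t^{\bn})^N V=0$ and then runs two inductions with the operators $d_{\bm}$ (equations \eqref{nil_1}--\eqref{nil_3}) to trade the $t^{-\bn}$ factors for extra $t^{\bn}$ factors and eventually eliminate them. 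Your step 1 asserts, without proof, essentially the output of that whole machine.

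The second, smaller issue is that your ``main obstacle'' paragraph is only a sketch. Passing from $(t^{\bm})^{N-k}(t^{\bn})^kV=0$ to the vanishing of general products $t^{\bm_1}\cdots t^{\bm_M}V=0$ requires tracking which determinant coefficients can vanish at each application of $\ad d_{\br}$; the paper handles this by working with shifted exponents $\bm_i-\bn$ under the hypothesis $\sum_i\epsilon_i\bm_i\notin\Q\bn$ and only at the very end choosing $\bn$ generically to cover arbitrary $\bn_1,\dots,\bn_{3N}$ (which also disposes of your ``all-parallel'' case without a separate argument). Your final reduction is in the same spirit, but the assertion that the ``problematic diagonal summands vanish via antisymmetry together with the already-established identities'' is not substantiated and cannot be checked as written. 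To repair the proof you should abandon the direct claim $(t^{\bm})^NV=0$, start instead from $(t^{-\bn}t^{\bn})^NV=0$, and carry out the inductions explicitly.
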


\begin{proof} Without loss of generality, we may assume that $t^0V=0$.
Set $\deg(d_{\bm})=\deg(t^{\bm})=\bm$, then we have a
$\Z^2$-gradation on $\GG\ltimes A$ and $U(A)$. Denote $U=U(A)$ for
short and let $U_{\bm}$ be the homogeneous component of degree $\bm$. It is
clear that any element in $U_0$ is nilpotent on each $V_{\bm}$.
Since $\dim V_{\bm}, \bm\in\Z^2$ have a uniform upper bound, we have
$U_0^NV=0$ for some $N\in\N$. In particular,
$(t^{\bn}t^{-\bn})^NV=0$ for any $\bn\neq 0$.

Choose any $\bm, \bn\in\Z^2$ such that $\det{\bm\choose\bn}\neq 0$,
then for any $v\in V$ we have
$$0=d_{\bm}(t^{\bn}t^{-\bn})^Nv=\det{\bn\choose\bm}Nt^{\bm+\bn}(t^{\bn})^{N-1}(t^{-\bn})^{N}v-\det{\bn\choose\bm}Nt^{\bm-\bn}(t^{-\bn})^{N-1}(t^{\bn})^{N}v.$$
Applying $t^{\bn}$ to the above equation, we deduce
\begin{equation}\label{nil_1}
t^{\bm-\bn}(t^{-\bn})^{N-1}(t^{\bn})^{N+1}V=0,\ \forall\
\det{\bm\choose\bn}\neq 0. \end{equation} First we want to prove the
following equation for all $1\leq j\leq N$ by induction on $j$:
\begin{equation}\label{nil_2}
t^{\bm_1-\bn}\cdots
t^{\bm_j-\bn}(t^{-\bn})^{N-j}(t^{\bn})^{N+j}V=0,
\end{equation}for any $m_1,\cdots,m_j\in\Z^2 $ with
$\sum_{i=1}^j\epsilon_i\bm_i\notin\Q\bn, \epsilon_i\in\{0,1\}$. When
$j=1$ \eqref{nil_2} is just \eqref{nil_1}. Now suppose that
\eqref{nil_2} holds for some $j$ with $1\leq j<N$. Applying
$d_{\bm}$ to \eqref{nil_2} for any $\bm$ with
$\bm+\sum_{i=1}\epsilon_i\bm_i\notin\Q\bn$ for all
$\epsilon_i\in\{0,1\}$, we have
\begin{equation*}\begin{split}
0 = & d_{\bm}t^{\bm_1-\bn}\cdots t^{\bm_j-\bn}(t^{-\bn})^{N-j}(t^{\bn})^{N+j}V\\
  = & (N+j)\det{\bn\choose\bm}t^{\bm+\bn}t^{\bm_1-\bn}\cdots t^{\bm_j-\bn}(t^{-\bn})^{N-j}(t^{\bn})^{N+j-1}V\\
    & -(N-j)\det{\bn\choose\bm}t^{\bm-\bn}t^{\bm_1-\bn}\cdots t^{\bm_j-\bn}(t^{-\bn})^{N-j-1}(t^{\bn})^{N+j}V.\\
\end{split}\end{equation*}
Applying $t^{\bn}$ to this equation, we get
$t^{\bm-\bn}t^{\bm_1-\bn}\cdots
t^{\bm_j-\bn}(t^{-\bn})^{N-j-1}(t^{\bn})^{N+j+1}V=0$, that is,
\eqref{nil_2} holds for all $j$ with $1\leq j\leq N$. Take $j=N$ in
\eqref{nil_2}, we get $t^{\bm_1-\bn}\cdots
t^{\bm_N-\bn}(t^{\bn})^{2N}V=0.$

Next we prove for all $0\leq j\leq 2N$ that
\begin{equation}\label{nil_3}\begin{split}
t^{\bm_1-\bn}\cdots t^{\bm_{N+j}-\bn}(t^{\bn})^{2N-j}V=0,\ \forall\
\sum_{i=1}^j\epsilon_i\bm_i\notin\Q\bn, \epsilon_i\in\{0,1\}.
\end{split}\end{equation}
The equation \eqref{nil_3} for $j=0$ has been proved. Now suppose
that \eqref{nil_3} holds for some $j$ with $0\leq j<2N$. Applying
$d_{\bm-2\bn}$ to \eqref{nil_3} for any $\bm$ with
$\bm+\sum_{i=1}^j\epsilon_i\bm_i\notin\Q\bn$ for all
$\epsilon_i\in\{0,1\}$, we have
\begin{equation*}\begin{split}
0 & =  d_{\bm-2\bn}t^{\bm_1-\bn}\cdots t^{\bm_{N+j}-\bn}(t^{\bn})^{2N-j}V \\
  & =  (2N-j)\det{\bn\choose\bm}t^{\bm-\bn}t^{\bm_1-\bn}\cdots t^{\bm_{N+j}-\bn}(t^{\bn})^{2N-j-1}V,\\
\end{split}\end{equation*}
which is just \eqref{nil_3} for $j+1$. So \eqref{nil_3} holds for
all $0\leq j\leq 2N$. Take $j=2N$ there we obtain that
\begin{equation}\label{nil_4}\begin{split}
t^{\bm_1-\bn}\cdots t^{\bm_{3N}-\bn}V=0,\ \forall\
\sum_{i=1}^{3N}\epsilon_i\bm_i\notin\Q\bn, \epsilon_i\in\{0,1\}.
\end{split}\end{equation}

Now for any $\bn_1,\cdots,\bn_{3N}$, take any
$\bn\in\Z^2\setminus\sum_{\epsilon_i\in\{0,1\}}\C\big(
\sum_{i=1}^{3N}\epsilon_i\bn_i\big),$ and let $\bm_i=\bn_i+\bn$ for
all $i=1,\cdots, 3N$. By \eqref{nil_4}, we have that
$t^{\bn_1}\cdots t^{\bn_{3N}}V=0$, that is $A^{3N}V=0$. Then it is
clear that the subspace
$$V'=\{ v\in V \mid  Av=0 \}$$
is a nonzero $(\GG\ltimes A)$-submodule of $V$. Therefore $V'=V$ and
$AV=0$, as desired.
\end{proof}

Now we can give the proof of the Theorem \ref{weak condition}.
Indeed, let $V$ be an irreducible $(\GG\ltimes A)$-module satisfying
the conditions $(J1)$ and $(J2')$, then by Proposition
\ref{inj_or_nil}, Lemma \ref{inj} and Lemma \ref{nil}, we see that
condition $(J2)$ holds. 
Thus $V$ is a jet module.

\section{Finite dimensional modules over $\LL$}\label{LL-modules}


In this section, we will give a description of the irreducible jet
modules over $\GG$. From the argument in the last paragraph of
Section \ref{jet modules}, we see that the homogeneous space of a
jet module admits a natural $\LL$-module structure, where $\LL$ is
the Lie algebra defined by \eqref{def_LL}. To determine this module
structure, we first make some preparations on the ideals of this
Lie algebra $\LL$. 
For any $k\geq 2$, define the subspace
$$\I_{k}=\left\{\sum_{i=1}^na_iL(\bm_i)\ |\
\sum_{i=1}^na_i\bm_i^{\br}=0,\ \forall\ \br\in\Z_+^2, 2\leq
|\br|\leq k\right\},$$ where $|\br|=\br(1)+\br(2)$,
$\bm^{\br}=\bm(1)^{\br(1)}\bm(2)^{\br(2)}$ and we have make the
convention that $0^0=1$. Then we have the following properties:
\begin{proposition}\label{property} Assume $k,l\in\N$ with $k,l\geq 2$,
then we have
\begin{enumerate}
\item\label{property_(1)} $\I_{k}$ are co-finite dimensional ideals of $\LL$ for all $k\geq
2$;
\item\label{property_(2)} $\I_{k+1}\subseteq \I_{k}$;
\item\label{property_(3)} $\cap_{k=2}^{+\infty}\I_{k}=0$;
\item\label{property_(4)} $[\I_{k},\I_{l}]\subseteq\I_{k+l-1}$;
\item\label{property_(5)} $\LL/\I_{2}\cong\sl_2$;
\item\label{property_(7)} $[\LL,\LL]=\left\{\sum_{i=1}^na_iL(\bm_i)\ |\ \sum_{i=1}^na_i\bm_i=0, a_1,\dots,a_n\in \C\right\}$;
\item\label{property_(8)} $\LL/(\I_2\cap[\LL,\LL])\cong\sl_2\oplus\C z_1\oplus\C
z_2$, where $z_1$ and $z_2$ are central.
\end{enumerate}
\end{proposition}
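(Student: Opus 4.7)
The strategy is to encode both $\I_k$ and $[\LL,\LL]$ as common kernels of explicit linear functionals on $\LL$. For each $\br\in\Z_+^2$, let $\phi_\br\bigl(\sum_ia_iL(\bm_i)\bigr):=\sum_ia_i\bm_i^\br$. By definition $\I_k=\bigcap_{2\leq|\br|\leq k}\ker\phi_\br$, and once (7) is established, $[\LL,\LL]=\ker\phi_{e_1}\cap\ker\phi_{e_2}$ (with $e_1,e_2$ the standard basis of $\Z^2$).

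Items (1), (2), and (7) then reduce to routine verifications. Finite codimensionality in (1) is clear since only finitely many $\br$ satisfy $2\leq|\br|\leq k$. To see $\I_k$ is an ideal, given $x=\sum_ia_iL(\bm_i)\in\I_k$ and $L(\bn)\in\LL$, the binomial expansion $(\bn+\bm_i)^\br-\bn^\br-\bm_i^\br=\sum_{0<\bs<\br}{\br\choose\bs}\bn^{\br-\bs}\bm_i^\bs$ combined with $\det{\bm_i\choose\bn}\bm_i^\bs=\bn(2)\bm_i^{\bs+e_1}-\bn(1)\bm_i^{\bs+e_2}$ shows that $\phi_\br([L(\bn),x])$ is a linear combination of $\phi_{\bs+e_j}(x)$ with $2\leq|\bs+e_j|\leq k$, each of which vanishes. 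Item (2) is immediate. For (7), the defining bracket satisfies $(\bm+\bn)-\bm-\bn=0$, giving $[\LL,\LL]\subseteq J:=\ker\phi_{e_1}\cap\ker\phi_{e_2}$; since $\phi_{e_1},\phi_{e_2}$ are linearly independent functionals on $\LL$ vanishing on commutators, $\operatorname{codim}[\LL,\LL]\geq 2=\operatorname{codim}J$, forcing equality.

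For (3), given $\sum_ia_iL(\bm_i)\in\bigcap_k\I_k$ with distinct $\bm_i\in\Z^2\setminus\{0\}$, use standard polynomial interpolation to produce $Q_i\in\C[x_1,x_2]$ with $Q_i(\bm_j)=\delta_{ij}$, and set $P_i=(x_1^2+x_2^2)Q_i$, all of whose monomials have degree $\geq 2$. Testing the hypothesis against $P_i$ forces $a_i(\bm_i(1)^2+\bm_i(2)^2)=0$; since $\bm_i\neq 0$ in $\Z^2$, $a_i=0$. For (5), $\dim(\LL/\I_2)=3$ from the three defining functionals, and I will exhibit $q:\LL/\I_2\to\C[x,y]_2$ by $\overline{L(\bm)}\mapsto-\tfrac12(\bm(1)x+\bm(2)y)^2$, a linear isomorphism by dimension count. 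Setting $u=\bm(1)x+\bm(2)y$ and $v=\bn(1)x+\bn(2)y$, the identity $(u+v)^2-u^2-v^2=2uv$ together with the Poisson computation $\{u^2,v^2\}=4(\bm(1)\bn(2)-\bm(2)\bn(1))uv$ shows $q$ intertwines brackets, and it is classical that $(\C[x,y]_2,\{\cdot,\cdot\})\cong\sl_2$.

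The main obstacle is (4). For $x=\sum_ia_iL(\bm_i)\in\I_k$, $y=\sum_jb_jL(\bn_j)\in\I_l$, and $2\leq|\br|\leq k+l-1$, the same expansion yields
$$\phi_\br([x,y])=\sum_{0<\bs<\br}{\br\choose\bs}\bigl[\phi_{\br-\bs+e_2}(x)\phi_{\bs+e_1}(y)-\phi_{\br-\bs+e_1}(x)\phi_{\bs+e_2}(y)\bigr].$$
Each factor $\phi_{\br-\bs+e_j}(x)$ vanishes when $|\bs|\geq|\br|-k+1$, while $\phi_{\bs+e_j}(y)$ vanishes when $|\bs|\leq l-1$; since $|\bs|$ ranges over $\{1,\dots,|\br|-1\}$, these two ranges together cover everything precisely when $|\br|\leq k+l-1$, so every summand vanishes. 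Finally, (8) combines (5) and (7): the map $\pi:\LL\to(\LL/\I_2)\oplus(\LL/[\LL,\LL])$ is a Lie algebra homomorphism because the second summand is abelian (so brackets land entirely in the first), with kernel $\I_2\cap[\LL,\LL]$. Linear independence on $\LL$ of the five functionals $\phi_{e_1},\phi_{e_2},\phi_{2e_1},\phi_{e_1+e_2},\phi_{2e_2}$ gives $\dim(\LL/(\I_2\cap[\LL,\LL]))=5$, whence $\pi$ is surjective, yielding $\LL/(\I_2\cap[\LL,\LL])\cong\sl_2\oplus\C z_1\oplus\C z_2$ with $z_1,z_2$ central.
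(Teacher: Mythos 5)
Most of this is sound and close to the paper's own route: items (1), (2) and (4) via the functionals $\phi_{\br}$ and the binomial expansion are essentially the computations in the paper, and your arguments for (3) (interpolating against polynomials of the form $(x_1^2+x_2^2)Q_i$, all of whose monomials have degree at least $2$, instead of the paper's Vandermonde argument) and for (5) (realizing $\LL/\I_2$ as the quadratic polynomials under the Poisson bracket instead of verifying the $e,f,h$ relations by hand) are correct and somewhat slicker.

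There is, however, a genuine gap in your proof of (7), and (8) depends on it. Writing $J=\ker\phi_{(1,0)}\cap\ker\phi_{(0,1)}$, the observation that $\phi_{(1,0)}$ and $\phi_{(0,1)}$ kill every bracket gives only the inclusion $[\LL,\LL]\subseteq J$, i.e.\ $\operatorname{codim}[\LL,\LL]\geq\operatorname{codim}J=2$. That inequality points the wrong way: it is perfectly consistent with $[\LL,\LL]$ being a proper (even infinite-codimensional) subspace of $J$, so nothing ``forces equality.'' What is missing is the reverse inclusion $J\subseteq[\LL,\LL]$, equivalently $\operatorname{codim}[\LL,\LL]\leq 2$. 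The paper supplies this by noting that $L(\bm)-\bm(1)L(1,0)-\bm(2)L(0,1)\in[\LL,\LL]$ for every nonzero $\bm$, which follows from \eqref{def_LL} by a short induction (for instance, when $\bm(2)\neq 0$ and $\bm\neq(1,0)$ one has $[L(\bm-(1,0)),L(1,0)]=\bm(2)\bigl(L(\bm)-L(\bm-(1,0))-L(1,0)\bigr)$); since any $x=\sum_ia_iL(\bm_i)$ with $\sum_ia_i\bm_i=0$ equals $\sum_ia_i\bigl(L(\bm_i)-\bm_i(1)L(1,0)-\bm_i(2)L(0,1)\bigr)$, these elements span $J$ and the reverse inclusion follows. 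Once this is added, your identification of $\I_2\cap[\LL,\LL]$ as the common kernel of the five functionals $\phi_{(1,0)},\phi_{(0,1)},\phi_{(2,0)},\phi_{(1,1)},\phi_{(0,2)}$ is legitimate and your dimension count for (8) goes through.
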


\begin{proof} (1) Take any $\sum_{i=1}^n a_iL(\bm_i)\in \I_k$. Then for
any $\bn\in\Z^2$, we have $$[L(\bn), \sum_{i=1}^n
a_iL(\bm_i)]=\sum_{i=1}^n
a_i\det{\bm_i\choose\bn}\big(L(\bm_i+\bn)-L(\bm_i)-L(\bn)\big).$$
Then for any $\br\in\Z_+^2, 2\leq |\br|\leq k$, we have
\begin{equation*}\begin{split}
  & \sum_{i=1}^n a_i\det{\bm_i\choose\bn}\big((\bm_i+\bn)^{\br}-(\bm_i)^{\br}-(\bn)^{\br}\big)\\
= & \sum_{i=1}^n a_i \big(\bm_i(2)\bn(1)-\bm_i(1)\bn(2)\big)\sum_{\bs,\br-\bs\in\Z_+^2\setminus\{0\}}{\br\choose\bs}\bm_i^{\bs}\bn^{\br-\bs}\\
= & \sum_{\bs,\br-\bs\in\Z_+^2\setminus\{0\}}{\br\choose\bs}\Big(\bn^{\br-\bs+(1,0)}\sum_{i=1}^n a_i \bm_i^{\bs+(0,1)}-\bn^{\br-\bs+(0,1)}\sum_{i=1}^na_i\bm_i^{\bs+(1,0)}\Big)=0.\\
\end{split}\end{equation*}
Thus $[L(\bn), \sum_{i=1}^n a_iL(\bm_i)]\in\I_k$ and $\I_k$ is an
ideal of $\LL$.

For any element $x=\sum_{i=1}^na_iL(\bm_i)$ we can assign a vector
$v_x=\big(\sum_{i=1}^na_i\bm_i^{\br}\big)_{\br\in\Z_+^2, 2\leq
|\br|\leq k}$. Note that there exist $x_1,\cdots,x_{l}$ such that
$\span\{v_{x_1},\cdots,v_{x_{l}}\}=\span\{v_x\ |\ x\in\LL\}$. Then
it is easy to see that $\LL=\span\{x_1,\cdots,x_{l}\}+\I_k$, that
is, $\I_k$ has finite codimension in $\LL$.

(2) is clear. To prove (3) we take any
$\sum_{i=1}^na_iL(\bm_i)\in\cap_{k=2}^{+\infty}\I_k$, then
$\sum_{i=1}^na_i\bm_i^{\br}=0$ for all $\br\in\Z_+, |\br|\geq 2$.
Since all $\bm_i$ are distinct, we may choose $\br\in\N^2$ such that
all $\bm_i^{\br}$ are distinct. Then we have
$\sum_{i=1}^na_i(\bm_i^{\br})^l=0$ for all $l\in\N$, forcing all
$a_i=0$. Hence $\cap_{k=2}^{+\infty}\I_k=0$.

(4) Take any $x=\sum_{i=1}^pa_iL(\bm_i)\in\I_k$ and
$y=\sum_{j=1}^qb_jL(\bn_j)\in\I_l$, then we have
\begin{equation*}\begin{split}
[\sum_{i=1}^pa_iL(\bm_i),
\sum_{j=1}^qb_jL(\bn_j)]=\sum_{i=1}^p\sum_{j=1}^q
a_ib_j\det{\bn_j\choose\bm_i}\big(L(\bm_i+\bn_j)-L(\bm_i)-L(\bn_j)\big).
\end{split}\end{equation*}
For any $\br\in\Z_+^2$ with $2\leq |\br|\leq k+l-1$, we can compute
\begin{equation}\label{property_1}\begin{split}
 & \sum_{i=1}^p\sum_{j=1}^q a_ib_j\det{\bn_j\choose\bm_i}\big((\bm_i+\bn_j)^{\br}-(\bm_i)^{\br}-(\bn_j)^{\br}\big)\\
=& \sum_{i=1}^p\sum_{j=1}^q a_ib_j\det{\bn_j\choose\bm_i}\sum_{\bs,\br-\bs\in\Z_+^2\setminus\{0\}}{\br\choose\bs}(\bm_i)^{\bs}(\bn_j)^{\br-\bs}\\
=& \sum_{\bs,\br-\bs\in\Z_+^2\setminus\{0\}}{\br\choose\bs} \sum_{i=1}^p\sum_{j=1}^q a_ib_j\big(\bm_i(2)\bn_j(1)-\bm_i(1)\bn_j(2)\big)(\bm_i)^{\bs}(\bn_j)^{\br-\bs}.\\
\end{split}\end{equation}
Note that $$\sum_{i=1}^p\sum_{j=1}^q
a_ib_j\bm_i(2)\bn_j(1)(\bm_i)^{\bs}(\bn_j)^{\br-\bs}=\Big(\sum_{i=1}^p
a_i\bm_i(2)(\bm_i)^{\bs}\Big)\Big(\sum_{j=1}^qb_j\bn_j(1)(\bn_j)^{\br-\bs}\Big)=0,$$
where the last equation holds because either $2\leq |\bs|+1\leq k$
or $2\leq |\br-\bs|+1\leq l$. Similarly $$\sum_{i=1}^p\sum_{j=1}^q
a_ib_j\bm_i(1)\bn_j(2)(\bm_i)^{\bs}(\bn_j)^{\br-\bs}=0.$$ This
indicates the element in (\ref{property_1}) equals to $0$ and
hence $[x,y]\in\I_{k+j-1}$ as desired. 

(5) Denote by $e,f,h$ the images of $-L(1,0), L(0,1)$ and
$L(1,1)-L(1,0)-L(0,1)$ in $\LL/\I_2$. It is easy to check that
$L(i,j)-ij(L(1,1)-L(1,0)-L(0,1))-i^2L(1,0)-j^2L(0,1)\in\I_2$ for all
$i,j\in\Z_+$, hence $\LL/\I_2=\C e\oplus\C h\oplus\C f$.
From (\ref{def_LL}) and the definition of $\I_2$, we get
\begin{equation}\aligned
& [-L(1,0), L(0,1)]=L(1,1)-L(1,0)-L(0,1),\\\\
& [L(1,1)-L(1,0)-L(0,1),-L(1,0)]\\
=& -\big(L(2,1)-L(1,1)-L(1,0)\big)+\big(L(1,1)-L(0,1)-L(1,0)\big)\\
=&-2L(1,0) -\left(L(2,1)-2L(1,1)-2L(1,0)+L(0,1)\right)\in -2L(1,0)+\I_2,\\\\
 & [L(1,1)-L(1,0)-L(0,1),L(0,1)]\\
=& -\big(L(1,2)-L(1,1)-L(0,1)\big)+\big(L(1,1)-L(0,1)-L(1,0)\big)\\
=&-2L(0,1) -\left(L(1,2)-2L(1,1)-2L(0,1)+L(1,0)\right)\in -2L(0,1)+\I_2 \\
\endaligned\end{equation}
Then $[e,f]=h,\ [h,e]=2e, [h,f]=-2f$. So $\LL/\I_2\cong\sl_2$ as Lie
algebras.


(6) By (\ref{def_LL}), $L(\bm)-\bm(1)L(1,0)-\bm(2)L(0,1)\in [\LL,\LL]$ for any nonzero
$\bm\in \Z^2$. Thus (6) is true.

(7) We note that
$[\LL,\LL]+\I_2=\LL$ and hence $[\LL,\LL]/(\I_2\cap
[\LL,\LL])=\LL/\I_2\cong\sl_2$ and $\I_2/(\I_2\cap
[\LL,\LL])=\LL/[\LL,\LL]$ is $2$-dimensional. More precisely, we
have $\LL/[\LL,\LL]=\C z_1\oplus\C z_2$, where $z_1$ and $z_2$ are
the images of $L(1,0)$ and $L(0,1)$ in $\LL/[\LL,\LL]$ respectively.
As in \eqref{property_(5)} we still denote by $e,f,h$ the images of
$-L(1,0), L(0,1)$ and $L(1,1)-L(1,0)-L(0,1)$ in $\LL/\I_2$
respectively. Thus 
$$\frac{\LL}{\I_2\cap[\LL,\LL]}=\frac{[\LL,\LL]}{\I_2\cap
[\LL,\LL]}\oplus \frac{\I_2}{\I_2\cap [\LL,\LL]}\cong
\frac{\LL}{\I_2}\oplus\frac{\LL}{[\LL,\LL]}\cong\sl_2\oplus\C
z_1\oplus\C z_2,$$ which is a reductive Lie algebra with center $\C
z_1\oplus\C z_2$. The corresponding projection is given by
\begin{equation}\label{projection}\begin{split}
& \pi:\ \LL\rightarrow \sl_2\oplus\C z_1\oplus\C z_2,\ \pi(L(\bm))=
\left(\begin{array}{cc}
\bm(1)\bm(2) & -\bm(1)^2\\
\bm(2)^2 & -\bm(1)\bm(2)\\
\end{array}\right)+\bm(1)z_1+\bm(2)z_2.
\end{split}\end{equation}
\end{proof}

For a Lie algebra $\I$, denote $\I^2=[\I,\I],\I^3=[\I,\I^2],\dots,
\I^{k+1}=[\I,\I^k]$ and $\I^w=\cap_{k=2}^{+\infty}\I^{k}$.

\begin{lemma}\label{l4.2} If $\I$ is a finite dimensional Lie algebra such that  $\I^w=0$,
then $\I$ is a nilpotent Lie algebra.
\end{lemma}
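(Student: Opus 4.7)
The plan is to exploit finite-dimensionality to force the descending chain of lower central series subspaces $\I^2 \supseteq \I^3 \supseteq \I^4 \supseteq \cdots$ to stabilize, and then identify the stable value with $\I^w$.

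First I would observe that by the Jacobi identity and the definition $\I^{k+1} = [\I,\I^k]$, each $\I^k$ is a subspace of $\I$, and these subspaces form a descending chain
\[
\I \supseteq \I^2 \supseteq \I^3 \supseteq \cdots.
\]
Since $\dim \I < \infty$, the sequence of dimensions $\dim \I^k$ is a non-increasing sequence of nonnegative integers, hence eventually constant. Thus there exists $N \geq 2$ such that $\I^k = \I^N$ for all $k \geq N$.

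Next I would deduce that
\[
\I^w = \bigcap_{k=2}^{+\infty} \I^k = \I^N,
\]
since for $k \geq N$ the terms coincide with $\I^N$ and for $2 \leq k < N$ the inclusion $\I^N \subseteq \I^k$ already holds. By hypothesis $\I^w = 0$, so $\I^N = 0$, and this is precisely the definition of $\I$ being nilpotent (of nilpotency class at most $N-1$).

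There is no real obstacle here; the only thing to be careful about is the indexing convention (the paper's lower central series starts at $\I^2 = [\I,\I]$ rather than $\I^1 = \I$), which does not affect the stabilization argument. The argument uses only finite-dimensionality and the definition of $\I^w$, not any structural hypotheses on $\I$.
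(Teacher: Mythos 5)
Your proof is correct and is essentially the paper's own argument, just spelled out in more detail: finite-dimensionality forces the descending chain $\I^2\supseteq\I^3\supseteq\cdots$ to stabilize, so $\I^w$ equals some term $\I^N$, which is then zero by hypothesis.
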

\begin{proof}Since $\I$ is  finite dimensional, there is a positive integer $n$ such that $\I^n=\I^w=0$. So
$\I$ is nilpotent.
\end{proof}
Now we can give the description of finite dimensional irreducible
modules over $\LL$.
\begin{theorem}\label{finite} Let $V$ be a finite dimensional irreducible
$\LL$-modules. Then $\I_2\cap[\LL,\LL]$ acts trivially on $V$, i.e,
$V$ is an irreducible module over $\sl_2\oplus\C z_1\oplus\C z_2$.
\end{theorem}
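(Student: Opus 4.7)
The plan is to show that $\bar{\J}=(\J+K)/K$ vanishes in $\bar{\LL}=\LL/K$, where $K=\mathrm{Ann}_\LL(V)$. Since $V$ is finite dimensional, $\bar{\LL}$ is a finite dimensional Lie algebra acting faithfully and irreducibly on $V$, and $\bar{\J}$ is an ideal of $\bar{\LL}$ contained in $[\bar{\LL},\bar{\LL}]$ (because $\J\subseteq[\LL,\LL]$).

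The first step is a structural reduction forced by faithful irreducibility. By Lie's theorem for ideals, $\operatorname{rad}(\bar{\LL})$ acts on $V$ by a character $\chi$, and $\chi$ vanishes on $[\bar{\LL},\operatorname{rad}(\bar{\LL})]$. By the Levi decomposition one has $[\bar{\LL},\operatorname{rad}(\bar{\LL})]=\operatorname{rad}(\bar{\LL})\cap[\bar{\LL},\bar{\LL}]$, so this intersection acts as zero; faithfulness of $V$ then forces $\operatorname{rad}(\bar{\LL})\cap[\bar{\LL},\bar{\LL}]=0$. Consequently $\bar{\LL}=\mathfrak{s}\oplus\operatorname{rad}(\bar{\LL})$ as a direct sum of Lie algebras with $\operatorname{rad}(\bar{\LL})$ abelian, and any solvable ideal of $\bar{\LL}$ that lies inside $[\bar{\LL},\bar{\LL}]$ must be zero. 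Since $\bar{\J}\subseteq[\bar{\LL},\bar{\LL}]$ is automatic, it suffices to show that $\bar{\J}$ is solvable.

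For this I apply Lemma~4.2 to the finite dimensional Lie algebra $\bar{\J}$: it suffices to verify $\bar{\J}^w=\bigcap_k\bar{\J}^k=0$. Since $\J\subseteq\I_2$, property~(4) ($[\I_k,\I_l]\subseteq\I_{k+l-1}$) together with induction gives $\J^k\subseteq\I_2^k\subseteq\I_{k+1}$, hence $\bar{\J}^k\subseteq\overline{\I_{k+1}}$. In the finite dimensional $\bar{\LL}$ the descending chain $\overline{\I_k}$ stabilizes to some $J_\infty=\bigcap_k\overline{\I_k}$, and combined with $\bar{\J}\subseteq[\bar{\LL},\bar{\LL}]$ we obtain $\bar{\J}^w\subseteq J_\infty\cap[\bar{\LL},\bar{\LL}]$.

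The main obstacle is then to show $J_\infty\cap[\bar{\LL},\bar{\LL}]=0$. The idea is to use that $\bar{\LL}/\overline{\I_2}$ is a quotient of the simple algebra $\LL/\I_2\cong\sl_2$, so it is either $\sl_2$ or $0$. In the case $\bar{\LL}/\overline{\I_2}\cong\sl_2$, since $\operatorname{rad}(\bar{\LL})$ is abelian while $\sl_2$ is not, a direct comparison using $\bar{\LL}=\mathfrak{s}\oplus\operatorname{rad}(\bar{\LL})$ shows that $\overline{\I_2}$ must coincide with $\operatorname{rad}(\bar{\LL})$; thus $J_\infty\subseteq\overline{\I_2}=\operatorname{rad}(\bar{\LL})$ and $J_\infty\cap[\bar{\LL},\bar{\LL}]\subseteq\operatorname{rad}(\bar{\LL})\cap[\bar{\LL},\bar{\LL}]=0$. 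In the remaining case $\overline{\I_2}=\bar{\LL}$, one exploits the filtration $\I_2\supseteq\I_3\supseteq\cdots$ with abelian subquotients $\I_k/\I_{k+1}$ (which follows from $[\I_k,\I_k]\subseteq\I_{2k-1}\subseteq\I_{k+1}$ for $k\geq 2$) to argue that $\I_2$ admits no nonzero semisimple quotient; this forces $\mathfrak{s}=0$, so $\bar{\LL}$ is abelian and $\bar{\J}=0$ trivially. Combining both cases yields $\bar{\J}^w=0$, Lemma~4.2 gives $\bar{\J}$ nilpotent, and the faithfulness reduction above concludes $\bar{\J}=0$, i.e.\ $\J\cdot V=0$.
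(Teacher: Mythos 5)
Your overall architecture is genuinely different from the paper's: you first prove that $\bar{\LL}=\LL/K$ is reductive with $\operatorname{rad}(\bar{\LL})\cap[\bar{\LL},\bar{\LL}]=0$ (essentially reproving the result the paper imports from Humphreys), and only then try to show that $\bar{\J}$ is solvable; the paper instead shows first that the image of $\I_2$ is nilpotent and deduces reductivity afterwards. Your reduction $\bar{\J}^{w}\subseteq J_\infty\cap[\bar{\LL},\bar{\LL}]$ is correct, and you have put your finger on the real crux: the paper disposes of this point by asserting $(\I_2/\ker\rho)^{w}=0$ from properties (2)(3)(4) alone, which tacitly identifies the intersection of the images $\overline{\I_k}$ with the image of $\bigcap_k\I_k$. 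You rightly refuse to make that identification. Unfortunately, neither of your two case arguments actually establishes $J_\infty\cap[\bar{\LL},\bar{\LL}]=0$, so the main step of your proof is missing.

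In Case 1, write $\bar{\LL}=\mathfrak{s}\oplus\mathfrak{z}$ with $\mathfrak{z}=\operatorname{rad}(\bar{\LL})$ central. An ideal $I$ with $\bar{\LL}/I\cong\sl_2$ has the form $I=\mathfrak{s}'\oplus\mathfrak{z}$, where $\mathfrak{s}'$ is the sum of all but one of the simple factors of $\mathfrak{s}$; nothing in ``$\operatorname{rad}(\bar{\LL})$ is abelian while $\sl_2$ is not'' rules out $\mathfrak{s}'\neq0$. Proving $\mathfrak{s}'=0$ is exactly the assertion that $\overline{\I_2}$ is solvable, which is what the whole argument is trying to establish, so this step is circular. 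In Case 2, the principle you invoke is false in general: a descending chain of ideals with abelian subquotients and trivial intersection does not preclude a nonzero semisimple quotient. A semisimple quotient $\I_2/P$ only forces $\I_2=\I_k+P$ for every $k$, which is perfectly compatible with $\bigcap_k\I_k=0$; compare the free Lie algebra $F$ on two generators, where $\bigcap_k F^k=0$ and each $F^k/F^{k+1}$ is abelian, yet $F$ and every term $F^k$ of its lower central series surject onto $\sl_2$. Hence the conclusion $J_\infty\cap[\bar{\LL},\bar{\LL}]=0$ is not proved, and the gap cannot be closed using only the formal properties (2)(3)(4) of the chain $\I_k$: one needs information specific to $\LL$, for instance an argument that some $\I_N$ actually lies in $\ker\rho$, or the polynomiality of the action established in Section 5.
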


\begin{proof}
Let $\rho: \LL\rightarrow \gl(V)$ be the representation of $\LL$ in
$V$.  By
Proposition \ref{property} (2)(3)(4), $(\I_2/\ker(\rho))^w=0$. Then from Lemma \ref{l4.2},
$\I_2/\ker(\rho)$ is a nilpotent ideal of $\LL/\ker(\rho)$.
Thus $\I_2/\ker(\rho)$ is in the radical of
$\LL/\ker(\rho)$. By a well known result on finite dimensional Lie
algebra (see Proposition 19.1 of \cite{H}), we know that
$\LL/\ker(\rho)$ is a reductive Lie algebra with trivial or
$1$-dimensional center. So $\dim(\I_{2}/\ker(\rho))\leq 1$ and all
elements in $\I_2$ acts as scalars on $V$. Note that any element in
$\I_2\cap [\LL,\LL]$ has trace $0$ on $V$, so $\I_2\cap
[\LL,\LL]\subseteq \ker(\rho)$.
\end{proof}

Using Theorem \ref{finite} and formula \eqref{d_mt^n by L(m)}, we
can determine the structure of irreducible jet modules explicitly.
Before doing this, we first define some irreducible jet modules. For
any irreducible finite dimensional $\sl_2$-module $U$,
$\a=(\alpha_1,\alpha_2)\in\C^2$ and $\l\in \C$, we can define a
$({\GG}\ltimes A)$-module structure on $\C[t_1^{\pm},
t_2^{\pm}]\otimes U$ by
\begin{equation*}
t^{\bm}(t^{\bn}\otimes v)=\l\ t^{\bm+\bn}\otimes v
\end{equation*} and
\begin{equation*}\begin{split}
    d_{\bm}(t^{\bn}\otimes v )= t^{\bm+\bn}\otimes
    \left(\left(\begin{array}{cc}\bm(1)\bm(2)&-\bm(1)^2\\\bm(2)^2&-\bm(1)\bm(2)\end{array}\right)
    +\det{\textbf{\a}+\bn\choose\bm}\right)v\\
\end{split}\end{equation*}
for any $\bm,\bn\in\Z^2$ and $v\in U$. We denote this module by
$F(\alpha,\l, U)$. One check that $F(\alpha,\l, U)\cong
F(\alpha',\l', U')$ as jet modules if and only if $\a-\a'\in \Z^2,
\l=\l'$ and $U\cong U'$ as $\sl_2$-modules.

\begin{theorem}\label{char irre jet} If $V$ is an irreducible jet $\GG$-module,
then $V \cong F(\alpha,U,\l)$ for some irreducible finite
dimensional $\sl_2$-module $U$ and $\a=(\alpha_1,\alpha_2)\in \C^2$,
$\l\in\C\setminus\{0\}$.
\end{theorem}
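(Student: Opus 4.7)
The plan is to combine the results of Sections~\ref{jet modules} and~\ref{LL-modules} with the projection formula~\eqref{projection}. Let $V=\bigoplus_{\bm\in\Z^2}V_{\bm}$ be an irreducible jet $\GG$-module of parameter $\l$. Since every $t^{\bm}$ acts bijectively on $V$, we have $\dim V_{\bm}=\dim V_0<\infty$ for all $\bm\in\Z^2$, and formula~\eqref{d_m by L(m)} endows $V_0$ with a natural finite dimensional $\LL$-module structure.

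First, I would show that $V_0$ is irreducible as an $\LL$-module. Given any nonzero $\LL$-submodule $W\subseteq V_0$, I would consider $V'=\sum_{\bm\in\Z^2}t^{\bm}W$. Quasi-associativity $(J2)$, together with the fact (established in the proof of Lemma~\ref{inj}) that $t^{\bm}t^{-\bm}$ acts as a nonzero scalar on $V$, implies that $V'$ is $A$-stable. The identity~\eqref{d_mt^n by L(m)},
\[
d_{\bm}(t^{\bn}w)=t^{\bm+\bn}L(\bm)w+\det{\bn\choose\bm}t^{\bm+\bn}w,
\]
then shows that $V'$ is also $\GG$-stable, because $L(\bm)w\in W$. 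Thus $V'$ is a nonzero $(\GG\ltimes A)$-submodule, so $V'=V$, and comparing the degree-zero components forces $W=V_0$.

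Next, Theorem~\ref{finite} tells us that the $\LL$-action on $V_0$ factors through the reductive quotient $\LL/(\I_2\cap[\LL,\LL])\cong\sl_2\oplus\C z_1\oplus\C z_2$. Since $z_1,z_2$ are central and $V_0$ is a finite dimensional irreducible module over $\C$, Schur's lemma yields scalars $\beta_1,\beta_2\in\C$ with $z_i|_{V_0}=\beta_i$, and $V_0\cong U$ as $\sl_2$-modules for some irreducible finite dimensional $\sl_2$-module $U$. Via the explicit formula~\eqref{projection}, the operator $L(\bm)$ on $V_0$ acts as the $\sl_2$-matrix appearing in the definition of $F(\a,\l,U)$ plus the scalar $\beta_1\bm(1)+\beta_2\bm(2)$.

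Finally, substituting this into~\eqref{d_mt^n by L(m)} and using the identity
\[
\det{\a+\bn\choose\bm}=\a_1\bm(2)-\a_2\bm(1)+\det{\bn\choose\bm},
\]
one sees that the $(\GG\ltimes A)$-action on $V$ matches that on $F(\a,\l,U)$ precisely when $\a=(\beta_2,-\beta_1)$. Fixing a basis of $V_0\cong U$ then yields a linear bijection $\phi\colon V\to F(\a,\l,U)$ sending $t^{\bm}v\mapsto t^{\bm}\otimes v$, which intertwines both the $A$-action (by quasi-associativity and the normalisation of $t^0$) and the $\GG$-action (by the identification just obtained). The main obstacle is the scalar bookkeeping in this last step: one has to translate carefully between the central characters of $\LL$ and the shift parameter $\a$ built into $F(\a,\l,U)$; once this is done, the argument is a clean application of Theorem~\ref{finite}.
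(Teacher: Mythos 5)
Your proposal is correct and follows essentially the same route as the paper: pass to the finite dimensional $\LL$-module $V_0$, apply Theorem~\ref{finite} and the projection~\eqref{projection}, and reassemble $V=A\otimes V_0\cong F(\a,\l,U)$ via~\eqref{d_mt^n by L(m)}. The one place you go beyond the paper is in explicitly verifying that $V_0$ is irreducible over $\LL$ (via the submodule $V'=\sum_{\bm}t^{\bm}W$), a step the paper asserts without proof; your argument for it is correct and welcome.
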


\begin{proof}
Let $V=\bigoplus_{\bm\in\Z^2}V_{\bm}$ be an irreducible jet
$\GG$-module. We denote $t^\bm v$ by $t^\bm\otimes v$ for all $v\in V_0, \bm\in \Z^2\setminus\{0\}$. Then there exists nonzero $\l\in\C$ such that
$t^{\bm}(t^{\bn}\otimes v)=\l t^{\bm+\bn}\otimes v$ for all
$\bm,\bn\in\Z^2$ and $v\in V_0$. By the argument at the end of Section
\ref{jet modules}, we see that $V_0$ is a finite dimensional
irreducible module over the infinite dimensional Lie algebra $\LL$
defined by \eqref{def_LL}. By Theorem \ref{finite}, $V_0$ can be
viewed as a finite dimensional irreducible module over
$\sl_2\oplus\C z_1\oplus\C z_2$, as given in Proposition
\ref{property} \eqref{property_(8)}.

We can give the action of $\LL$ on $V_0$ explicitly. There exist
$\a_1,\a_2\in\C$ such that $z_1$ and $z_2$ act as scalars $-\a_2$
and $\a_1$ respectively. Using the projection given by
\eqref{projection}, we can write out the $\LL$-module action
explicitly as
$$L(\bm)v=\left(\begin{array}{cc}
\bm(1)\bm(2) & -\bm(1)^2\\
\bm(2)^2 & -\bm(1)\bm(2)\\
\end{array}\right)v+(\bm(2)\a_1-\bm(1)\a_2)v.$$

Since $t^{\bn}$ acts bijectively on $V$, we can identify
$V_{\bn}=t^\bn V_0$.  Then by \eqref{d_m by
L(m)} and \eqref{d_mt^n by L(m)} we can deduce that
\begin{equation*}\begin{split}
d_{\bm}v = &  t^{\bm}\otimes L(\bm)v\\
         = &  \left(\begin{array}{cc}
\bm(1)\bm(2) & -\bm(1)^2\\
\bm(2)^2 & -\bm(1)\bm(2)\\
\end{array}\right)t^{\bm}\ot v+\detm{\bf{\a}}{\bm} t^{\bm}\ot v,
\end{split}\end{equation*} where $\bf\a=(\a_1,\a_2)$ and
\begin{equation*}\begin{split}
d_{\bm}(t^{\bn}\ot v)= & d_{\bm}t^{\bn}v=t^{\bn}\ot d_{\bm}v+\detm{\bn}{\bm}t^{\bm+\bn}\ot v\\
                     = & \left(\begin{array}{cc}
\bm(1)\bm(2) & -\bm(1)^2\\
\bm(2)^2 & -\bm(1)\bm(2)\\
\end{array}\right) t^{\bm+\bn}\ot v+\detm{\bf{\a}+\bn}{\bm}t^{\bm+\bn}\ot v,
\end{split}\end{equation*}
for all $\bm, \bn\in\Z^2$ and $v\in V_0$. So $A\ot V_0$ is submodule
of $V$. By the irreducibility of $V$, we have $V=A\ot V_0$. Then $V
\cong F(\alpha,U,\l)$.
\end{proof}

%
%

\section{Indecomposable jet modules}\label{indecomposable jet modules}

In this section we will give a description of indecomposable jet
$\GG$-modules, which is an analogue of the main result of \cite{B1}.
The arguments in Section \ref{irreducible jet modules} and Section
\ref{LL-modules} depend on the irreducibility of the modules, so the
methods there do not generally apply to indecomposable modules. We
will follow the idea of \cite{B1}, using the technique of polynomial
modules.

From now on, we fix an indecomposable jet $\GG$-modules $V$ with
gradation as in \eqref{grading}.
We first show that $V_0$ is a polynomial
module over $\LL$. Indeed, we have

\begin{theorem}\label{poly_V_0}
Any finite dimensional $\LL$-module $W$ is a polynomial module.
\end{theorem}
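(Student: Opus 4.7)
The plan is to show that $\ker(\rho)$ contains
\[
\tilde\I_k=\Big\{\sum_i a_iL(\bm_i)\in\LL\ :\ \sum_i a_i\bm_i^{\br}=0\text{ for all }\br\in\Z_+^2\text{ with }|\br|\leq k\Big\}
\]
for some $k\in\N$. Granted this containment, $\rho$ factors through the finite-dimensional quotient $\LL/\tilde\I_k$, which the moment map $\overline{L(\bm)}\mapsto(\bm^{\br})_{|\br|\leq k}$ embeds into $\C^{\binom{k+2}{2}}$. Choosing $\bm_1,\ldots,\bm_d\in\Z^2\setminus\{0\}$ whose moment vectors are a basis of the image, one writes $\overline{L(\bm)}=\sum_i c_i(\bm)\overline{L(\bm_i)}$ where $c_i(\bm)$ is the coordinate of the moment vector of $\bm$ in the chosen basis---a polynomial in $\bm(1),\bm(2)$. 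The matrix entries of $\rho(L(\bm))$ in any basis of $W$ are therefore polynomial in $\bm$, so $W$ is a polynomial module.

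First I verify $\tilde\I_k$-analogues of Proposition \ref{property}: direct moment computations paralleling that proposition show $\tilde\I_k$ is an ideal with $[\tilde\I_k,\tilde\I_l]\subseteq\tilde\I_{k+l-1}$ and $\bigcap_k\tilde\I_k=0$. The identities $\tilde\I_k+\I_{k+1}=\I_k$ and $\tilde\I_k\cap\I_{k+1}=\tilde\I_{k+1}$, again by moment counting, give $\tilde\I_k/\tilde\I_{k+1}\cong\I_k/\I_{k+1}$; as an $\LL/\I_2\cong\sl_2$-module this quotient is the irreducible $V_{k+1}$ of dimension $k+2$. The explicit formula for the projection $\pi:\LL\to\sl_2\oplus\C z_1\oplus\C z_2$ from Proposition \ref{property} has as coefficients exactly the order $1$ and $2$ moments of $\bm$, so $\pi(\tilde\I_2)=0$. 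Combined with Theorem \ref{finite}, this forces $\tilde\I_2$ to act trivially on every irreducible $\LL$-module. Taking a Jordan-H\"older filtration $0=W_0\subset\cdots\subset W_n=W$ then yields $\tilde\I_2\cdot W_i\subseteq W_{i-1}$ and hence $\tilde\I_2^n\cdot W=0$; by Engel's theorem, $\rho(\tilde\I_2)$ is a nilpotent ideal of $\bar\LL:=\LL/\ker(\rho)$.

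Finally, examine the descending chain $\overline{\tilde\I_k}:=\rho(\tilde\I_k)$ of ideals inside $\rho(\tilde\I_2)$. Each quotient $\overline{\tilde\I_k}/\overline{\tilde\I_{k+1}}$ is a quotient of the irreducible $V_{k+1}$, hence either $0$ or the full $V_{k+1}$. By complete reducibility of $\sl_2$-modules (acting through the image of $\LL/\I_2$ on the associated graded) the filtration splits, and finite-dimensionality of $\bar\LL$ bounds the number of nonzero quotients. The principal obstacle is to exclude a nonzero residual $R:=\bigcap_k\overline{\tilde\I_k}$: here I would combine the bracket relation $[\tilde\I_k,\tilde\I_l]\subseteq\tilde\I_{k+l-1}$ with the nilpotency of $\rho(\tilde\I_2)$ to show that $R$, being a nilpotent ideal lying in $\overline{\tilde\I_k}$ for every $k$, must collapse to zero. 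Once $R=0$, the chain terminates, giving $\overline{\tilde\I_k}=0$ for $k$ exceeding the highest index where the quotient is nonzero, so $\tilde\I_k\subseteq\ker(\rho)$ and the first paragraph's argument completes the proof.
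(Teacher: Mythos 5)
Your reduction in the first paragraph is correct: once $\tilde{\I}_k\subseteq\ker\rho$ for some $k$, each $\rho(L(\bm))$ is a fixed linear combination of finitely many $\rho(L(\bm_i))$ with coefficients polynomial in $\bm$, so $W$ is a polynomial module. The preparatory facts are also essentially fine: $\tilde{\I}_k$ is an ideal, $[\tilde{\I}_k,\tilde{\I}_l]\subseteq\tilde{\I}_{k+l-1}$, and since $\tilde{\I}_2\subseteq\I_2\cap[\LL,\LL]$, Theorem \ref{finite} together with a Jordan--H\"older filtration makes $\rho(\tilde{\I}_2)$ a nilpotent ideal of $\rho(\LL)$. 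The gap is exactly where you flag it: eliminating the residual $R=\bigcap_k\rho(\tilde{\I}_k)$. The two ingredients you propose to combine cannot do this. Every identity extractable from your filtration has the form $\tilde{\I}_{k+1}=[\tilde{\I}_2,\tilde{\I}_k]+\tilde{\I}_m$; the tail $\tilde{\I}_m$ can never be dropped inside the infinite-dimensional algebra $\LL$, because $[\tilde{\I}_2,\tilde{\I}_k]$ need not be an intersection of the $\tilde{\I}_m$ even though $\bigcap_m\tilde{\I}_m=0$. Applying $\rho$ therefore only yields the vacuous $R=[\rho(\tilde{\I}_2),R]+R$, and nilpotency of $\rho(\tilde{\I}_2)$ only gives $[R,R]\subseteq R$; neither excludes, say, a one-dimensional central $R$ acting by a nonzero nilpotent operator. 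What would be needed is $R\subseteq[\rho(\tilde{\I}_2),R]$, and nothing in your setup supplies it.

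The paper closes precisely this hole by an exact (not graded) identity exhibiting specific elements of deep ideals as genuine iterated brackets: for the difference derivatives $\p^lL(s,1)=\sum_{i=0}^l(-1)^{l-i}{l\choose i}L(s+i,1)$ one has $\p^lL(s,1)\in\I_k$ for $l>k$, and the explicit computation $[\p^lL(s-r-1,1),\p^3L(r+1,0)]-[\p^lL(s-r,1),\p^3L(r,0)]=\p^{l+3}L(s,1)$ forces $\p^{l+3k}L(s,1)\in(\I_2\cap[\LL,\LL])^k$. Since $(\I_2\cap[\LL,\LL])^k$ acts as zero for large $k$, these particular elements are killed by $\rho$, and Lagrange interpolation (Lemma 4.2 of \cite{B1}) gives polynomiality of $s\mapsto\rho(L(s,1))$ along a line; two such families of lines are then patched together to cover $\Z^2\setminus\{0\}$. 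So the theorem is obtained not by abstractly killing a co-finite ideal, but by producing explicit interpolating elements together with an exact bracket formula placing them arbitrarily deep in the lower central series. To repair your argument you would need an analogous exact statement, e.g. that $\tilde{\I}_k$ (or at least a spanning family of interpolating elements in it) lies in $(\tilde{\I}_2)^{j(k)}$ with $j(k)\to\infty$.
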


\begin{proof} 
Fix any $s\in\Z\setminus\{0\}$, and we
define the difference derivative inductively by $\partial^0
L(s,1)=L(s,1)$ and $\partial^{l+1}L(s,1)=\partial^l
L(s+1,1)-\partial^l L(s,1)$ for all $l\in\Z_+$. One can check the
following explicit formula by induction on $l$:
$$\partial^lL(s,1)=\sum_{i=0}^l(-1)^{l-i}{l\choose i}L(s+i,1).$$

\noindent\textbf{Claim 1.} $\partial^lL(s,1)\in \mathcal{I}_{k}$ for
all $s\in\Z$ and $l,k\in\Z_+$ with $l> k\geq 2$.

For convenience, we define the linear operators on $\LL$:
$$\phi_{\br}\Big(\sum_{i=1}^na_iL(\bm_i)\Big)=\sum_{i=1}^na_i\bm^{\br}_i,\ \ \forall\ \bm_i\in\Z^2,\br\in\Z_+^2,n\in\N.$$
Then we have $\mathcal{I}_k=\bigcap_{2\leq |\br|\leq
k}\ker\phi_{\br}$ for any $k\geq 2$. Note that
$$\phi_{(j,j')}(\partial^lL(s,1))=\sum_{i=0}^l(-1)^{l-i}{l\choose i}(s+i)^j,\ \forall\ j, j'\in\Z_+.$$
Given any $k\geq 2$, we need only to show
\begin{equation}\label{partial=0}
\sum_{i=0}^l(-1)^{l-i}{l\choose i}(s+i)^j=0,\ \forall\ j, l\in\Z_+
{\rm\ with\ } l>k\geq j.
\end{equation}
The result for $j=0$ is clear. Now suppose that \eqref{partial=0}
holds for any $j\in \Z_+$ with $j\leq k-1$, then we have
\begin{equation*}
\begin{split}
\sum_{i=0}^l(-1)^{l-i}{l\choose i}(s+i)^{j+1} & =\sum_{i=0}^l(-1)^{l-i}i{l\choose i}(s+i)^j\\
 =l\sum_{i=1}^l(-1)^{l-i}{l-1\choose i-1}(s+i)^j & = -l\sum_{i=0}^{l-1}(-1)^{l-i}{l-1\choose i}(s+i+1)^j=0,\emph{}\\
\end{split}
\end{equation*}
where the first and last equalities follow from induction
hypothesis. The claim is proved.

By a similar argument, we can deduce that
$$\p^3(L(r,0))=L(r+3,0)-3L(r+2,0)+3L(r+1,0)-L(r,0)\in\I_2,\ \forall\
r\in\Z.$$ A direct calculation shows that
\begin{equation*}\begin{split}
[\p^lL(s,1),L(r,0)]&=\sum_{i=0}^l(-1)^{l-i}{l\choose i}[L(s+i,1),L(r,0)]\\
&=r\sum_{i=0}^l(-1)^{l-i}{l\choose i}\big(L(s+r+i,1)-L(s+i,1)-L(r,0)\big)\\
&=r\Big(\p^lL(s+r,1)-\p^lL(s,1)\Big).
\end{split}\end{equation*}
Then
\begin{equation*}\begin{split}
 &[\p^lL(s-r,1),\p^3L(r,0)]\\
=&[\p^lL(s-r,1),L(r+3,0)-3L(r+2,0)+3L(r+1,0)-L(r,0)]\\
=&(r+3)\Big(\p^lL(s+3,1)-\p^lL(s-r,1)\Big)-3(r+2)\Big(\p^lL(s+2,1)-\p^lL(s-r,1)\Big)\\
 &+3(r+1)\Big(\p^lL(s+1,1)-\p^lL(s-r,1)\Big)-r\Big(\p^lL(s,1)-\p^lL(s-r,1)\Big)\\
=&(r+3)\p^lL(s+3,1)-3(r+2)\p^lL(s+2,1)+3(r+1)\p^lL(s+1,1)-r\p^lL(s,1)\\
=&r\p^{l+3}L(s,1)+3\p^lL(s+3,1)-6\p^lL(s+2,1)+3\p^lL(s+1,1).
\end{split}\end{equation*}
Replacing $r$ with $r+1$ and making difference, we obtain
$$[\p^lL(s-r-1,1),\p^3L(r+1,0)]-[\p^lL(s-r,1),\p^3L(r,0)]=\p^{l+3}L(s,1).$$
Noticing that $\p^lL(s,1), \p^l(r,0)\in \I_2\cap[\LL,\LL]$ for all
$l\geq 3$, we may deduce by induction that
$$\p^{l+3k}L(s,1)\subseteq (\I_2\cap [\LL,\LL])^{k},\ \forall, k,l\in\N.$$

Now $W$ is a finite dimensional $\LL$-module and hence has a
composition series of submodules. By Theorem \ref{finite}, we see
that $(\I_2\cap [\LL,\LL])^k$ acts trivially on $W$ for sufficiently
large $k\in\N$. In particular, there exists $N\geq 2$ such that
$\p^{l}L(s,1)\in\ker\rho$ for $l>N$, where $\rho$ is the
corresponding representation of $\LL$ in $W$. Then Claim 1 indicates
that $\rho(\partial^lL(s,1))=0$ for all $l\in\Z_+$ with $l>N$. By
Lemma 4.2 in \cite{B1} , there exists a polynomial $f_s$ in one
variable with coefficients in $\End(W)$ such that $\deg(f_s)\leq N$
and $\rho(L(s+i,1))=f_s\emph{}(s+i)$ for all $i\in\Z_+$. Replacing
$s$ with any other $s'\in\Z$, we get the polynomial $f_{s'}$ such
that $\rho(L(s'+i,1))=f_{s'}\emph{}(s'+i)$ for all $i\in\Z_+$. Since
$f_{s}$ and $f_{s'}$ have equal value for infinitely many elements
in $\Z_+$, we have $f_s=f_{s'}$ for all $s,s'\in\Z\emph{}$. Denote
this polynomial by $f$. Then we have $\rho(L(i,1))=f(i)$ in
$\End(W)$ for all $i\in\Z$.

Similarly, we can find polynomial $g$ with coefficients in $\End(W)$
such that $\rho(L(1,i))=g(i)$ for all $i\in\Z$. Then the commutator
\eqref{def_LL} gives that
\begin{equation*}
(ij-1)\rho(L(i+1,j+1))=g(j)f(i)-f(i)g(j)+(ij-1)(g(j)+f(i)),
\end{equation*}
for all $(i,j)\in \Z^2\setminus\{(-1,-1)\}$ or equivalently,
\begin{equation*}
(ij-i-j)\rho(L(i,j))=F(i,j),\ \forall\ (i,j)\in \Z\setminus\{0\},
\end{equation*}
where $F(i,j)=g(j-1)f(i-1)-f(i-1)g(j-1)+(ij-i-j)(g(j-1)+f(i-1))$.
Note that $ij-i-j=0$ implies $F(i,j)=0$, so $F(i,j)$ can be divided
by $ij-i-j$ as polynomials in $i,j$. Denote
$G(i,j)=F(i,j)/(ij-i-j)$, we have
\begin{equation}\label{poly 1}
\rho(L(i,j))=G(i,j),\ \forall\ ij-i-j\neq0.
\end{equation}

Replacing $L(s,1)$ and $L(1,s)$ with $L(s,0)$ and $L(0,s)$
respectively in the previous argument, we may get another polynomial
$H(i,j)$ such that
\begin{equation}\label{poly 2}
\rho(L(i,j))=H(i,j),\ \forall\ ij\neq0.
\end{equation}
Noticing that $G(i,j)=H(j,i)$ for $ij\neq 0$ and $ij-i-j\neq0$, we
have $G(i,j)=H(i,j)$ for all $i,j\in\Z^2$. Combing with the
equations \eqref{poly 1} and \eqref{poly 2}, we conclude that
$\rho(L(i,j))=G(i,j)$, for all $(i,j)\in\Z^2\setminus\{0\}$, as
desired.
\end{proof}

Regard $L\emph{}(\bm), \bm\in\Z^2\setminus\{0\}$ as linear
transformations on $V_0$, and applying Theorem \ref{poly_V_0} to the
$\LL$-module $V_0$ we can find $D_{\bi}\in \End{V_0}, i\in\Z_+^2$,
independent of $\bm$, such that
\begin{equation}\label{D(m)}
L(\bm)=\sum_{\bi\in\Z_+^2}^{\rm{finite}}\frac{\bm^{\bi}}{\bi
!}D_{\bi},\ \forall\ \bm\in\Z^2\setminus\{0\},
\end{equation}
where only finitely many $D_{\bi}\neq 0$. Substitute the above
equation into the commutator of $L(\bm)$ in \eqref{def_LL}, we have
\begin{equation*}
\begin{split}
\sum_{\bi\in\Z_+^2}\sum_{\bj\in\Z_+^2}\frac{\bm^{\bi}\bn^{\bj}}{\bi!\bj!}[D_{\bi},D_{\bj}]
= & \det{\bn\choose\bm}\Big( \sum_{\bi\in\Z_+^2}\frac{(\bm+\bn)^{\bi}}{\bi !}D_{\bi}-\frac{\bm^{\bi}}{\bi !}D_{\bi}-\frac{\bn^{\bi}}{\bi !}D_{\bi}\Big)\\
= & \det{\bn\choose\bm}\sum_{\bi, \bj\in\Z_+^2\setminus\{0\}}\frac{\bm^{\bi}\bn^{\bj}}{\bi !\bj !}D_{\bi+\bj}-\det{\bn\choose\bm}D_{(0,0)}\\
= & \sum_{\bi, \bj\in\Z_+^2\setminus\{0\}}\frac{\bm^{\bi+(0,1)}\bn^{\bj+(1,0)}}{\bi !\bj !}D_{\bi+\bj}-\sum_{\bi, \bj\in\Z_+^2\setminus\{0\}}\frac{\bm^{\bi+(1,0)}\bn^{\bj+(0,1)}}{\bi !\bj !}D_{\bi+\bj}\\
  & -\det{\bn\choose\bm}D_{(0,0)}\\
= & \sum_{\bi, \bj\in\Z_+^2, |\bi|\geq 2, |\bj|\geq 2}\Big(\frac{\bm^{\bi}\bn^{\bj}}{\bi !\bj !}\bj(1)\bi(2)D_{\bi+\bj-(1,1)}-\frac{\bm^{\bi}\bn^{\bj}}{\bi !\bj !}\bi(1)\bj(2)D_{\bi+\bj-(1,1)}\Big)\\
&  -\det{\bn\choose\bm}D_{(0,0)}\\
= & \sum_{\bi, \bj\in\Z_+^2, |\bi|\geq 2, |\bj|\geq 2}\frac{\bm^{\bi}\bn^{\bj}}{\bi !\bj !}\det{\bj\choose\bi}D_{\bi+\bj-(1,1)}-\det{\bn\choose\bm}D_{(0,0)}\\
\end{split}
\end{equation*}
Simplifying it and comparing the coefficients of
$\bm^{\bi}\bn^{\bj}/(\bi!\bj!)$, we can obtain
\begin{equation}\label{commotator_D_i}
[D_{\bi}, D_{\bj}] =\left\{
\begin{array}{ll}
\det{\bj\choose\bi}D_{\bi+\bj-(1,1)},&\ \forall\ \bi,\bj\in\Z_+^2\ \rm{with}\ |\bi|\geq 2\ \rm{and}\ |\bj|\geq 2\\\\
D_{(0,0)}\ \rm{or}\ -D_{(0,0)},& \ \rm{if}\ \bi=(1,0),\bj=(0,1)\ \rm{or}\ \bi=(0,1),\bj=(1,0)\\\\
0,& \ \forall\ \rm{other}\ \bi,\bj\in\Z_+^2\\\\
\end{array}\right..
\end{equation}

Denote the Lie algebra defined by the above commutator by $\BB_+$
and by abusing language we still use $\{D_{\bi}\,\mid
\bi\in\Z_+^2\}$ as a basis of $\BB_+$. We have
$$\BB_+=\BB'_+\oplus \mathcal{Z},$$
where $\BB'_+=\span\{D_{\bj} \mid \bj\in\Z_+^2, |\bj|\geq
2\}$ and $\mathcal{Z}=\span\{D_{(0,0)}, D_{(0,1)}, D_{(1,0)}\}$. 
Consequently, $V_0$ becomes a finite dimensional $\BB_+$-module.

\begin{remark} Note that the Lie algebra $\BB'_+$ is part of the Lie
algebra $\BB_n=\span\{D_{\bi}\ |\ \bi\in\Z^n\}$ ($n=2$ case) subject
to the following Lie bracket
$$[D_{\bi}, D_{\bj}]=\det{\pi(\bj)\choose\ \pi(\bi)}D_{\bi+\bj-(1,1)},\ \forall\ \bi,\bj\in\Z^n,$$
where $\pi:\Z^n\rightarrow \C^2$ is an additive map.
This algebra is a special case of the Lie algebras of Block type,
first introduced and studied by D. Djokovic and K. Zhao \cite{DZ}.
In a recent paper \cite{IM}, K. Iohara and O. Mathieu have
classified the $\Z^n$-graded simple Lie algebras
$\mathfrak{g}=\oplus_{\bi\in\Z^n}\mathfrak{g}_{\bi}$ with $\dim
\mathfrak{g}_{\bi}=1$ for any $\bi\in\Z^n$, which turns out to be isomorphic to
the algebra $\BB_n$ or the Kac-Moody algebra $A_1^{(2)}$ or
$A_2^{(2)}$. Moreover, K. Iohara \cite{I} studied the graded
$\BB_n$-modules with $1$-dimensional homogeneous spaces.
\end{remark}

The finite dimensional $\BB_+$-module can be characterized by the
following

\begin{lemma}\label{BB-mod}
Any finite codimensional ideal of $\BB'_+$ is of the form
$\mathcal{J}_k=\span\{D(\bj) \mid |\bj|\geq k\}$. Moreover
$[\J_k,\J_l]\subseteq \J_{k+l-2}$ for all $k,l\geq 2$ and
$\BB'_+/\J_3\cong \sl_2$.
\end{lemma}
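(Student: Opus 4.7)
First dispose of the subsidiary claims. For $\bi$ with $|\bi|\geq k$ and $\bj$ with $|\bj|\geq l$, formula \eqref{commotator_D_i} yields either $0$ or a scalar multiple of $D_{\bi+\bj-(1,1)}$, and $|\bi+\bj-(1,1)|=|\bi|+|\bj|-2\geq k+l-2$; hence $[\J_k,\J_l]\subseteq\J_{k+l-2}$, and in particular each $\J_k$ is an ideal. The quotient $\BB'_+/\J_3$ has basis $\{D_{(2,0)},D_{(1,1)},D_{(0,2)}\}$, and a direct application of \eqref{commotator_D_i} shows these elements satisfy the standard $\sl_2$ bracket relations after rescaling.

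The plan for the classification is to exploit the natural $\Z_{\geq 2}$-grading $\BB'_+=\bigoplus_{l\geq 2}\BB'_+[l]$ with $\BB'_+[l]=\span\{D_{\bi}:|\bi|=l\}$, and the observation that each $\BB'_+[l]$ is an irreducible $(l+1)$-dimensional module over $\sl_2\cong\BB'_+[2]$, a claim verified by computing the $\ad(D_{(1,1)})$-weights on the basis $\{D_{(l-j,j)}:0\leq j\leq l\}$. For any finite codimensional ideal $I$, the finite dimensional $\sl_2$-module $\BB'_+/I$ contains only finitely many distinct isotypic components, so the $\sl_2$-equivariant maps $\BB'_+[l]\to\BB'_+/I$ must vanish for all $l$ large (since $\BB'_+[l]$ is either embedded or killed by irreducibility, and the dimensions $l+1$ are distinct). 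This yields $\J_k\subseteq I$ for some $k\geq 2$.

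The heart of the proof is the analysis of the decreasing filtration $I_l=I\cap\J_l$, so that $I=I_2\supseteq I_3\supseteq\cdots\supseteq I_k=\J_k$. Each subquotient $I_l/I_{l+1}$ embeds $\sl_2$-equivariantly into $\J_l/\J_{l+1}=\BB'_+[l]$, so by irreducibility it equals $0$ or $\BB'_+[l]$. Let $S=\{l\geq 2:I_l/I_{l+1}=\BB'_+[l]\}$; the main obstacle is to show $S$ is upward-closed. Given $l\in S$, pick any $x\in I_l$ with nonzero projection $\sigma_l(x)\in\BB'_+[l]$. For any $z\in\BB'_+[3]$ we have $[z,x]\in I\cap\J_{l+1}=I_{l+1}$ with projection $\sigma_{l+1}([z,x])=[z,\sigma_l(x)]$, so $\sigma_{l+1}(I_{l+1})\supseteq[\BB'_+[3],\BB'_+[l]]$. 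The latter is an $\sl_2$-submodule of the irreducible $\BB'_+[l+1]$, and one explicit nonzero bracket (for instance $[D_{(3,0)},D_{(l-1,1)}]$ is a nonzero multiple of $D_{(l+1,0)}$) shows it is nonzero, hence all of $\BB'_+[l+1]$; so $l+1\in S$.

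Since $l\in S$ for all $l\geq k$, upward-closedness gives $S=\{k_0,k_0+1,\ldots\}$ for a unique $k_0\geq 2$. The equalities $I_l=I_{l+1}$ for $l<k_0$ telescope to $I=I_{k_0}\subseteq\J_{k_0}$, while the filtration subquotients yield $\dim(I/\J_k)=\sum_{l=k_0}^{k-1}(l+1)=\dim(\J_{k_0}/\J_k)$, forcing $I=\J_{k_0}$ as required.
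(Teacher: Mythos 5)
Your proof is correct, but it takes a genuinely different route from the paper's. The paper argues entirely by explicit bracket manipulations inside the ideal: it first shows that containing a single basis vector $D_{\bi}$ forces containment of all of $\J_{|\bi|}$ (bracketing with $D_{(1,2)},D_{(2,1)}$ to climb in degree and with $D_{(2,0)},D_{(0,2)}$ to move within a degree), and then uses the $\ad D_{(1,1)}$-eigenspace decomposition of the ideal together with an iterated $(\ad D_{(2,0)})^n$ trick to show the ideal is spanned by the basis vectors it contains; finite codimension enters only to guarantee the ideal is nonzero, so the paper in fact shows every nonzero ideal is some $\J_k$. You instead organize the argument around the grading $\BB'_+=\bigoplus_{l\geq 2}\BB'_+[l]$ and the observation that each $\BB'_+[l]$ is the irreducible $(l+1)$-dimensional $\sl_2$-module under $\ad\BB'_+[2]$, use finite codimension up front (via complete reducibility of $\BB'_+/I$) to trap some $\J_k$ inside $I$, and then run a filtration argument on the subquotients $I_l/I_{l+1}$. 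Your route is more conceptual and uses less computation; the paper's is more elementary and yields the marginally stronger conclusion about arbitrary nonzero ideals. One small imprecision in your write-up: from a single $x\in I_l$ with $\sigma_l(x)\neq 0$ you only get $\sigma_{l+1}(I_{l+1})\supseteq[\BB'_+[3],\sigma_l(x)]$, which need not a priori be the full $[\BB'_+[3],\BB'_+[l]]$; but since $l\in S$ gives $\sigma_l(I_l)=\BB'_+[l]$, letting $x$ range over preimages of all of $\BB'_+[l]$ (or simply noting that $\sigma_{l+1}(I_{l+1})$ is a nonzero $\sl_2$-submodule of the irreducible $\BB'_+[l+1]$) repairs this immediately, so it is not a gap.
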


\begin{proof}
Let $\J$ be an ideal of $\BB'_+$ with $\dim(\BB_+'/\J)<+\infty$. We
first show that $D_{\bi}\in \J$ implies $\J_k\subseteq \J$, where
$k=|\bi|\geq 2$. Indeed, if $D_{\bi}\in\J$, then
\begin{equation*}
\begin{split}
[D_{(1,2)}, D_{\bi}]=(2\bi(1)-\bi(2))D_{\bi+(0,1)}\in\J\\
\end{split}
\end{equation*}
and \begin{equation*}
\begin{split}
[D_{(2,1)}, D_{\bi}]=(\bi(1)-2\bi(2))D_{\bi+(1,0)}\in\J\\
\end{split}
\end{equation*}
imply that $D_{\bi+(0,1)}\in\J$ or $D_{\bi+(1,0)}\in\J$. Inductively
we can find $\bj\in\Z_+^2$ such that $D_{\bj}\in\J$ and $|\bj|=k'$
for any $k'\geq k$. The the following equations
\begin{equation*}
\begin{split}
[D_{(2,0)}, D_{(i,j)}] = \det\left(\begin{array}{cc}i & j\\ 2 & 0\end{array}\right)D_{(i+1,j-1)},\ \forall\ i\geq 0, j\geq 1,\\
\end{split}
\end{equation*}
\begin{equation*}
\begin{split}
[D_{(0,2)}, D_{(i,j)}] = \det\left(\begin{array}{cc}i & j\\ 0 & 2\end{array}\right)D_{(i-1,j+1)},\ \forall\ i\geq 1, j\geq 0,\\
\end{split}
\end{equation*}
implies that $D_{\bj}\in \J$ for all $|\bj|\geq k$. Hence
$\mathcal{J}_k\subseteq \mathcal{J}$.

Then by the commutator \eqref{commotator_D_i}, we have
\begin{equation*}
\begin{split}
[D_{(1,1)}, D_{\bj}]=(\bj(1)-\bj(2))D_{\bj},&\ \forall\ \bj\in\Z_+^2\ \rm{with}\ |\bj|\geq 2.\\
\end{split}
\end{equation*}
We see that $\J$ is the direct sum of eigenspaces with respect to
the action of $D_{(1,1)}$. Take any nonzero eigenvector of
$D_{(1,1)}$, say $x\in\J$ with eigenvalue $l\in\Z$. Without loss of
generality, we may assume that $l\geq 0$ and
$x=\sum_{i=i_0}^na_{i}D_{(i+l,i)}$ for some $a_i\in\C, i_0,
n\in\Z_+$ and $a_n\neq 0$. If $n>i_0$, then a direct calculation
shows that
$$(\ad D_{(2,0)})^n(x)=
\det\left(\begin{array}{cc}n+l & n\\ 2 & 0\end{array}\right)\cdots
\det\left(\begin{array}{cc}2n+l-1 & 1\\ 2 &
0\end{array}\right)a_nD_{(2n+l,0)}\in\J.$$ By the arguments in the
last paragraph, we have $\J_{2n+l}\subset\J$ and in particular
$D_{(n+l,n)}\in\J$. Inductively, we get $D_{(i+l,i)}\in\J$ for all
$i=i_0,\cdots,n$. Thus let $k\in\N$ be the smallest integer such
that $\J_k\cap\J\neq 0$, then $\J=\J_k$.

Finally from the commutator
\begin{equation*}
\begin{split}
[D_{(1,1)}, D_{(2,0)}]=2D_{(2,0)},\ [D_{(1,1)}, D_{(0,2)}]=-2D_{(0,2)}\ \rm{and}\ [D_{(0,2)}, D_{(2,0)}]=4D_{(1,1)}\\
\end{split}
\end{equation*}
we see that $\BB'_+/\J_3\cong\sl_2$.
\end{proof}

Now we can give the description of the indecomposable and
irreducible jet $\GG$-modules using finite dimensional
$\BB_+$-modules.

\begin{theorem} Let $V$ be an indecomposable jet $\GG$-module with
gradation as in \eqref{grading}. Then $V_0$ admits a $\BB_+$-module
structure. Moreover, we have
\begin{enumerate}
\item There exists a one-to-one correspondence between
indecomposable jet $\GG$-modules and indecomposable finite
dimensional $\BB_+$-modules given by
$$V\longmapsto V_0\ \ \rm{and}\ \ U\longmapsto V=\C[t_1^{\pm}, t_2^{\pm}]\otimes U,$$
where the $(\GG\ltimes A)$-action on $\C[t_1^{\pm},
t_2^{\pm}]\otimes U$ is defined as  $\bt^{\bm}\bt^{\bn}\otimes v=\l
\bt^{\bm+\bn}\otimes v$ and
$$d_{\bm}\bt^{\bn}\otimes v=\bt^{\bm+\bn}\otimes\Big(\det{\bn\choose\bm}+\sum_{\bi\in\Z_+^2}\frac{\bm^{\bi}}{\bi!}D_{\bi}\Big)v,\ \forall\ \bm,\bn\in\Z^2, v\in U.$$

\item There exists a one-to-one correspondence between
irreducible jet $\GG$-modules and irreducible finite dimensional
$\sl_2\oplus \C z_1\oplus \C z_2$-modules given by
$$V\longmapsto V_0\ \ \rm{and}\ \ U\longmapsto V=\C[t_1^{\pm}, t_2^{\pm}]\otimes U,$$
where the $(\GG\ltimes A)$-action on $\C[t_1^{\pm},
t_2^{\pm}]\otimes U$ is defined as $\bt^{\bm}\bt^{\bn}\otimes v=\l
\bt^{\bm+\bn}\otimes v$ and
\begin{equation*}\begin{split}
    d_{\bm}\bt^{\bn}\otimes v = \bt^{\bm+\bn}\otimes
    \left(\left(\begin{array}{cc}\bm(1)\bm(2)&-\bm(1)^2\\\bm(2)^2&-\bm(1)\bm(2)\end{array}\right)
    +\det{\textbf{\a}+\bn\choose\bm}\right)v\\
\end{split}\end{equation*}
for any $\bm,\bn\in\Z^2$ and $v\in U$, where  $\a=(\a_1,\a_2)\in \C^2$, and $z_1$ and $z_2$ act on $U$ as scalars $-\a_2$
and $\a_1$ respectively.
\end{enumerate}
\end{theorem}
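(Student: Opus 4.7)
The plan is to construct the two explicit functors in part (1), show they are mutually inverse and preserve indecomposability, and then derive part (2) as a consequence of Theorem \ref{finite}.

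For the forward functor $V \mapsto V_0$, I would apply Theorem \ref{poly_V_0} to the finite-dimensional $\LL$-module $V_0$ to obtain the polynomial expansion \eqref{D(m)} with finitely many nonzero $D_\bi \in \End(V_0)$. Substituting this expansion into the bracket \eqref{def_LL} and matching coefficients of $\bm^\bi \bn^\bj$---precisely the computation performed just before \eqref{commotator_D_i}---shows that the operators $D_\bi$ on $V_0$ satisfy exactly the defining relations of $\BB_+$, so $V_0$ inherits a finite-dimensional $\BB_+$-module structure.

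For the reverse functor $U \mapsto W := \C[t_1^{\pm}, t_2^{\pm}] \otimes U$, I would first observe that only finitely many $D_\bi$ act nontrivially on the finite-dimensional $U$ (by Lemma \ref{BB-mod}, the annihilator of the $\BB_+'$-action contains $\J_k$ for some $k$), so $L(\bm) := \sum_{\bi \in \Z_+^2} \frac{\bm^{\bi}}{\bi!} D_\bi$ is a well-defined polynomial in $\bm$ with values in $\End(U)$. The prescribed action then reads $d_\bm(\bt^\bn \otimes v) = \bt^{\bm+\bn} \otimes (L(\bm) + \det{\bn\choose\bm}) v$ and $\bt^\bm(\bt^\bn \otimes v) = \l \bt^{\bm+\bn} \otimes v$, so conditions (J1), (J2) and the semidirect identity $[d_\bm, t^\bn] = \det{\bn\choose\bm} t^{\bm+\bn}$ are immediate. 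The only substantive check is $[d_\bm, d_\bn] = \det{\bn\choose\bm} d_{\bm+\bn}$ on $\bt^\br \otimes v$; expanding both sides via the defining formula and simplifying the mixed scalar terms using multilinearity of the determinant reduces the identity to the bracket \eqref{def_LL} on $U$, which is the exact reversal of the derivation of \eqref{commotator_D_i} and therefore holds because the $D_\bi$ satisfy the $\BB_+$-relations by assumption.

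That the functors are mutually inverse is straightforward: the zero-graded piece of $W = \C[t_1^{\pm}, t_2^{\pm}] \otimes U$ is $1 \otimes U \cong U$, and the reconstructed $\BB_+$-action agrees with the given one by design; conversely, for a jet module $V$, bijectivity of each $t^\bn$ together with \eqref{d_m by L(m)}--\eqref{d_mt^n by L(m)} yields (after the harmless rescaling $t^\bm \mapsto \l^{-1} t^\bm$ that normalizes quasi-associativity) a $(\GG \ltimes A)$-isomorphism $\C[t_1^{\pm}, t_2^{\pm}] \otimes V_0 \to V$ sending $\bt^\bn \otimes v$ to $t^\bn v$. Since any $\BB_+$-decomposition $U = U_1 \oplus U_2$ lifts tautologically to a $(\GG \ltimes A)$-decomposition of $W$, and conversely any $(\GG \ltimes A)$-decomposition of $V$ restricts to a $\BB_+$-decomposition of $V_0$, indecomposability is preserved in both directions, establishing part (1). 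Part (2) then follows: an irreducible jet module $V$ corresponds under (1) to an irreducible finite-dimensional $\BB_+$-module $V_0$, equivalently (via the polynomial identification) an irreducible finite-dimensional $\LL$-module; by Theorem \ref{finite} the action factors through $\LL/(\I_2 \cap [\LL, \LL]) \cong \sl_2 \oplus \C z_1 \oplus \C z_2$, and substituting the explicit projection \eqref{projection} into the formula of (1) yields the explicit formula of (2). The main obstacle will be the bracket verification $[d_\bm, d_\bn] = \det{\bn\choose\bm} d_{\bm+\bn}$ in the reverse construction: it is purely combinatorial but requires tracking a handful of $\det$-products and collapsing them via $\det{\bm+\br\choose\bn} = -\det{\bn\choose\bm} + \det{\br\choose\bn}$ and its analogues, essentially running the derivation of \eqref{commotator_D_i} in reverse.
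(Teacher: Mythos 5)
Your proposal is correct, and for part (1) it follows essentially the paper's argument: the forward direction is exactly the discussion surrounding \eqref{D(m)} and \eqref{commotator_D_i}, the reverse direction uses Lemma \ref{BB-mod} to see that only finitely many $D_{\bi}$ act nonzero so that \eqref{D(m)} defines a polynomial $\LL$-action, and the bracket verification $[d_\bm,d_\bn]=\det{\bn\choose\bm}d_{\bm+\bn}$ is indeed just the derivation of \eqref{commotator_D_i} run backwards (the paper compresses this into ``the formula \eqref{d_mt^n by L(m)} makes $V$ into a $(\GG\ltimes A)$-module,'' so your explicit check is if anything more careful). Where you genuinely diverge is part (2): you deduce it by passing back to the $\LL$-picture and citing Theorem \ref{finite} together with the projection \eqref{projection} — in effect re-using Theorem \ref{char irre jet} from Section 4 — whereas the paper's Section 5 proof stays inside $\BB_+$ and argues directly with the ideals $\J_k$: it shows $\J_4\subseteq\J$ for the kernel $\J$ of an irreducible representation, that $\J_3/\J$ lies in the radical and hence acts by scalars, and then that $\J_3\subseteq[\BB'_+,\J_3]$ forces $\J_3\subseteq\J$, leaving an $\sl_2=\BB'_+/\J_3$-module with $D_{(0,0)}=[D_{(1,0)},D_{(0,1)}]$ acting as $0$. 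Both routes are legitimate (the introduction explicitly notes that Section 5 can reproduce the Section 4 description); the paper's version buys a self-contained argument within the Block-type algebra $\BB_+$, while yours is shorter but tacitly relies on the fact that under the polynomial dictionary the $\BB_+$-submodules of $U$ coincide with its $\LL$-submodules (true, since the spans of $\{\rho(L(\bm))\}$ and $\{\rho(D_{\bi})\}$ in $\End(U)$ agree by interpolation), a point worth stating explicitly.
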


\begin{proof} 
By the arguments previous Lemma \ref{BB-mod}, we see that any jet
$\GG$-module yields a finite dimensional $\BB_+$-module. On the
other hand, given any finite dimensional $\BB_+$-module $U$, only
finitely many $D_{\bi}, \bi\in\Z_+^2$, have nonzero actions on $U$ by
Lemma \ref{BB-mod}, hence the formula \eqref{D(m)} defines a
representation of $\LL$ on $U$. Setting $V=\bigoplus_{\bn\in\Z^2}
V_{\bn}, V_{\bn}=t^{\bn}\otimes U$, then the formula \eqref{d_mt^n
by L(m)} makes $V$ into a $(\GG\ltimes A)$-module. It is easy to see
that $V$ is an indecomposable (resp. irreducible) $(\GG\ltimes
A)$-module if and only if $U=V_0$ is an indecomposable (resp.
irreducible) $\BB_+$-module.

Now suppose that $U$ is an irreducible $\BB_+$-module, and let $\J$
be the kernel of the corresponding representation. Then $\BB_+/\J$
is reductive. Since $[\J_3,\J_3]\subseteq \J_4$ and
$\BB'_+/\J_3\cong \sl_2$, we see that $\J_4\subseteq \J$ and
$\J_3/\J$ is contained in the radical of $\BB_+/\J$. So all elements
in $\J_3$ act as scalars. On the other hand, we have $\J_3\subseteq
[\BB'_+,\J_3]$, hence $\J_3\subseteq \J$. We conclude that $U$ is a
$\BB'_+/\J_3\cong \sl_2$-module with $D_{(0,0)}, D_{(0,1)}$
and $D_{(1,0)}$ acting as scalars. In particular, $D_{(0,0)}=[D_{(1,0)},D_{(0,1)}]$ acts as $0$.

Choose a standard basis of $\BB'_+/\J_3\cong \sl_2$, say,
$e=-\frac{D_{(2,0)}}{2}, f=\frac{D_{(0,2)}}{2}, h=D_{(1,1)}$. Then
for any $\sl_2$-module $U$, we can regard $U$ as a $\BB_+$-module
via the above isomorphism and actions of $D_{(0,0)}, D_{(0,1)},
D_{(1,0)}$ as scalars $0, \textbf{\a}(1), -\textbf{\a}(2)$
respectively. Forming the $(\GG\ltimes A)$-module $V=A\otimes U$,
the module action can be written as $t^{\bm}(t^{\bn}\otimes v)=\l
t^{\bm+\bn}\otimes v$ for some $\l\neq 0$ and
\begin{equation*}\begin{split}
d_{\bm}t^{\bn}\otimes v & = t^{\bm+\bn}\otimes \left\{ \det{\bn\choose\bm}-\textbf{\a}(2)\bm(1)+\textbf{\a}(1)\bm(2)\right.\\
&\hskip 2cm
+\left.\bm(1)\bm(2)\left(\begin{array}{cc}1&0\\0&-1\end{array}\right)
 -\bm(1)^2\left(\begin{array}{cc}0&1\\0&0\end{array}\right)
 +\bm(2)^2\left(\begin{array}{cc}0&0\\1&0\end{array}\right)\right\}v\\
& = t^{\bm+\bn}\otimes
    \left(\left(\begin{array}{cc}\bm(1)\bm(2)&-\bm(1)^2\\\bm(2)^2&-\bm(1)\bm(2)\end{array}\right)
    +\det{\textbf{\a}+\bn\choose\bm}\right)v\\
\end{split}\end{equation*}
for all $\bm,\bn\in\Z^2$ and $v\in U$.
\end{proof}

\vspace{1mm}

\noindent
X.G.: Department of Mathematics, Zhengzhou university, Zhengzhou 450001,
Henan, P. R. China. Email: guoxq@amss.ac.cn

\vspace{0.2cm} \noindent G.L.: College of Mathematics and Information
Science, Henan University, Kaifeng 475004, China. Email:
liugenqiang@amss.ac.cn



\begin{thebibliography}{99999}

\bibitem[ADFL] {ADFL}I. Antoniadis, P. Ditsas, E. Floratos and J. Lliopoulos, New realizations
 of the Virasoro algebra as membrane symmetries, {\it Nucl. Phys.} B 300 (1988) 549--558.

\bibitem[B]{B1} Y. Billig, Jet modules, {\it Canad. J. Math.}, 59(2007), 712--729.

\bibitem[BB]{BB} S. Berman and Y. Billig, Irreducible representations for toroidal Lie algebra,
{\it J. Algebra}, 221(1999), 188--231.

\bibitem[BF1]{BF1} Y. Billig and V. Futorny, Classification of simple cuspidal
modules for solenoidal Lie algebras, arXiv:1306.5478v1. Preprint.

\bibitem[BF2]{BF} Y. Billig and V. Futorny, Classification of simple $W_n$-modules with finite-dimensional
weight spaces, arXiv:1304.5458v1. Preprint.

\bibitem[BGLZ]{BGLZ} P. Batra, X. Guo, R. Lu  and  K. Zhao, Highest weight
modules over the pre-exp-polynomial algebras, {\it J. Algebra,} 322
(2009), no. {\bf 12}, 4163--4180

\bibitem[Bl]{Bl} R. Block, On torsion-free abelian groups and Lie
algebras, {\it Proc. Amer. Math. Soc.}, {\bf 9}(1958), 613-620.

\bibitem[BT]{BT} Y. Billig and J. Talboom, Classification of Category $\mathcal{J}$ Modules for Divergence Zero Vector Fields on a Torus. Preprint, arXiv:1607.07067.

\bibitem[BZ]{BZ} Y. Billig and K. Zhao, Weight modules over exp-polynomial Lie algebras, {\it J. Pure Appl.
Algebra}, 191(2004), 23--42.


\bibitem[CGZ]{CGZ} H. Chen, X. Guo and K. Zhao, Irreducible Harish-Chandra modules over a class of Lie algebras of Block type, {\it Asian J. Math.}, to appear.

\bibitem[DZ]{DZ} D. Djokovic and K. Zhao,  Some infinite-dimensional
simple Lie algebras in characteristic $0$ related to those of Block,
{\it J. Pure Appl. Algebra}, 127(1998), no.2, 153--165.

\bibitem[F]{F} V. Futorny, Representations of affine Lie-algebras,
{\it Queen's papers in Pure and Applied Mathematics},  (1997).

\bibitem[FI]{FI}E. Floratos and J. Iliopoulos, A note on the classical symmetries of the closed bosonic membranes, {\it Phys. Lett.} B 201 (1988) 237--240

\bibitem[GLZ]{GLiuZ} X. Guo, G. Liu and K. Zhao, Irreducible Harish-Chandra modules over extended
Witt algebras, {\it Ark. Mat.,} 52 (2014), 99--112.

\bibitem[GLuZ1]{GLZ1} X. Guo, R. Lu and K. Zhao, Classification of irreducible Harish-Chandra modules over generalized Virasoro algebras, {\it Proc. Edinb. Math. Soci.}, 55(2012), 697--709. 

\bibitem[GLuZ2]{GLZ2} X. Guo, R. Lu and K. Zhao, Simple Harish-Chandra modules, intermediate series modules, and Verma modules over the loop-Virasoro algebra, {\it Forum Math.}, 23(2011), 1029--1052.

\bibitem[GLi]{GLi} X. Guo and X. Liu, Whittaker modules over Virasoro-like algebras, {\it J. Math. Phys.,} 52(2011), 093504.

\bibitem[GLiZ]{GLiZ} X. Guo, X. Liu and K. Zhao, Harish-Chandra modules over the $\Q$ Heisenebrg-Virasoro algebra, {\it J. Aust. Math. Soc.}, 89(2010), 9--15.

\bibitem[H]{H} J. E. Humphreys, Introduction to Lie Algebras and Representation Theory, second printing, revised, Graduate Texts in Mathematics, Vol. 9, Springer-Verlag, New York-Berlin, 1978.

\bibitem[I]{I} K. Iohara, A $\Z^N$-graded generalization of the Witt
algebra, preprint.

\bibitem[IM]{IM} K. Iohara and O. Mathieu, Classification of Simple Lie Algebras on
a Lattice. {\it Proc. London Math. Soc.}, 106(2013), no. 3,
508--564.

\bibitem[JL]{JL} J. Jiang and W. Lin, Partial classification of cuspidal simple modules for Virasoro-like algebra. {\it J. Algebra.} 464 (2016), 266--278.

\bibitem[K]{K} E.H. El Kinani, Between quantum Virasoro algebra $\mathcal {L} _c$ and generalized Clifford algebras,
{\it Adv. Appl. Clifford Algebra.}, 13.2 (2003), 127--131.

\bibitem[LG]{LG} G. Liu and X. Guo, Irreducible Harish-Chandra modules over generalized Heisenberg-Virasoro algebras,
{\it Isr. J. Math.}, DOI:10.1007/s11856-014-1114-z.

\bibitem[LT1]{LT1} W. Lin and S. Tan, The representations of
the skew derivation Lie algebras over the quantum torus, {\it Adv.
Math. (China)}, 34(2005), no. 4, 477--487.

\bibitem[LT2]{LT2} W. Lin and S. Tan, Nonzero level Harish-Chandra modules over the
Virasoro-like algebra, {\it J. Pure Appl. Algebra}, 204 (2006)
90--105.

\bibitem[LS]{LS} W. Lin and Y. Su, Quasi-finite irreducible graded Modules
 for the Virasoro-like algebra, {\it Algebra Colloq}, 20(2013), no. 2, 181--196.


\bibitem[LuZ1]{LZ2} R. Lu and K. Zhao, Classification of irreducible weight modules over higher rank Virasoro algebras, {\it Adv. Math.}, 201(2006), 630--656.

\bibitem[LuZ2]{LZ1} R. Lu and  K. Zhao, Classification of irreducible weight modules over the twisted Heisenberg-Virasoro algebra, {\it Comm. Contemp. Math.}, 12(2010), 183--205.

\bibitem[M]{M} O. Mathieu, Classification of Harish-Chandra modules over the Virasoro Lie algebra. {\it Invent. Math.}, 107(1992), 225--234.

\bibitem[Ma]{Ma} V. Mazorchuk, Classification of simple Harish-Chandra modules over $\Q$-Virasoro algebra. {\it Math. Nachr.}, 209(2000), 171--177.



\bibitem[Se]{Se} Sezgin E. Area-Preserving Diffeomorphisms, $W_\infty$ Algebras and $W_\infty$ Gravity, arXiv: hep-th/9202086, 1992.

\bibitem[S1]{S1} Y. Su, Classification of Harish-Chandra modules over the higher rank Virasoro  algebras, {\it Comm. Math. Phys.}, 240(2003), 539--551.

\bibitem[S2]{S2} Y. Su, Classification of quasi-finite modules over the Lie algebras of Weyl type, {\it Adv. Math.,} 174 (2003), 57--68.

\bibitem[S3]{S3} Y. Su, Quasifinite representations of a Lie algebra of
Block type, {\it J. Algebra,} 276 (2004), 117--128.

\bibitem[S4]{S4} Y. Su, Quasifinite representations of a family of
Lie algebras of Block type, {\it J. Pure Appl. Algebra,} 192 (2004),
293--305.



\bibitem[SXX]{SXX} Y.Su, C.Xia and Y.Xu, Classification of quasifinite
representations of a Lie algebra related to Block type,
arXiv:1210.7132.

\bibitem[TWX]{TWX} S. Tan, Q. Wang and C. Xu, Classification of irreducible Whittaker modules
for a Lie algebra arising from the $2$-dimensional torus, preprint.

\bibitem[WZ]{WZ} X. Wang and K. Zhao, Verma modules over the
Virasoro-like algebra, {\it J. Aust. Math. Soc.}, 80(2006), no.2,
179-191.

\end{thebibliography}
\end{document}